\author[M.~Caprio and R.~Gong]{Michele Caprio and Ruobin Gong}
\address{PRECISE Center, Dept. of Computer and Information  Science, 
University of Pennsylvania, 3330 Walnut Street, Philadelphia, PA 19104}
\email{caprio@seas.upenn.edu}
\urladdr{\url{https://mc6034.wixsite.com/caprio}} 
\address{Department of Statistics, Rutgers University, 110 Frelinghuysen Road, Piscataway, NJ}
\email{ruobin.gong@rutgers.edu}
\urladdr{\url{https://statistics.rutgers.edu/people-pages/faculty/people/130-faculty/403-robin-gong}}
\keywords{Subjective probability; Jeffrey's updating; Imprecise probabilities; Contraction; Dilation; Sure loss; Probability kinematics; Bayes' rule}
\subjclass[2010]{Primary: 62A01; Secondary: 62A01, 60A99}
\title{Dynamic Precise and Imprecise Probability Kinematics}
\newcommand{\vertiii}[1]{{\left\vert\kern-0.25ex\left\vert\kern-0.25ex\left\vert #1 
    \right\vert\kern-0.25ex\right\vert\kern-0.25ex\right\vert}}
   \def\MR#1{}
\theoremstyle{definition} 
\let\olddefi\defi
\renewcommand{\defi}{\olddefi\normalfont}
\let\oldrmk\rmk
\renewcommand{\rmk}{\oldrmk\normalfont}
\newtheorem{theorem}{Theorem}
\newtheorem{lemma}[theorem]{Lemma}
\newtheorem{proposition}[theorem]{Proposition}
\newtheorem{corollary}[theorem]{Corollary}
\newtheorem{remark}[theorem]{Remark}
\newtheorem{example}[theorem]{Example}
\providecommand{\MR}[1]{}
\providecommand{\MR}{\relax\ifhmode\unskip\space\fi MR }
\providecommand{\href}[2]{#2}
\newcommand\xqed[1]{%
  \leavevmode\unskip\penalty9999 \hbox{}\nobreak\hfill
  \quad\hbox{#1}}
\newcommand\demo{\xqed{$\triangle$}}
\newcommand{\robin}[1]{{\color{red!70!black}{ \bf \sf \scriptsize Robin:} \sf \scriptsize #1}}
\begin{document}

\maketitle
\thispagestyle{empty}

\begin{abstract}
    We introduce dynamic probability kinematics (DPK), a method for an agent to mechanically update subjective beliefs in the presence of partial information. We then generalize DPK to dynamic imprecise probability kinematics (DIPK), which allows the agent to express their initial beliefs via a set of probabilities in order to further take ambiguity into account. 
    We provide bounds for the lower probability associated with the updated probability sets, and we study the behavior of the latter, in particular contraction, dilation, and sure loss. Examples are provided to illustrate how the methods work.
\end{abstract}

\section{Introduction}\label{intro}
Updating an opinion on the likelihood of an event when new data becomes available is one of the most natural tasks we perform daily. The goal of this paper is to introduce a method to update mechanically the subjective beliefs of an agent that faces ambiguity and who is only able to collect partial information.

With the former, we mean that a single probability measure is not enough to encapsulate the agent's initial beliefs, a very common and well documented situation \cite[Section 1.1.4]{walley}; we inspect ambiguity in Section \ref{impr_prob_sec}. Partial information means that the agent cannot collect crisp evidence; rather, they gather  information whose nature is probabilistic. 
Our updating mechanism is based on probability kinematics (PK), an updating rule expressly conceived to deal with partial information. We inspect probability kinematics and its relation with the procedure we present in Section \ref{why}.

We call the method we propose dynamic imprecise probability kinematics (DIPK). It is framed within the credal sets theory paradigm. In this field, a set of probability measures (called a credal set) is used to capture either the ambiguity initially faced by the agent, or inconsistency/imprecision in the process of collecting data. To derive DIPK, we first assume that the agent does not face ambiguity. We come up with a simpler updating technique that we call dynamic probability kinematics (DPK), and then we generalize it by requiring the agent to specify a set $\mathcal{P}$ of probability measures representing their initial beliefs. DIPK is especially useful because it allows the update to be performed mechanically: the agent only needs to specify $\mathcal{P}$. To the best of our knowledge, this is the first time a PK-rooted mechanical procedure to update subjective beliefs in the presence of ambiguity and partial information within the credal sets theory paradigm is presented.

\subsection{Ambiguity}\label{impr_prob_sec}
Precise probabilities are widely employed as the central vocabulary of many modes of uncertainty reasoning, nearly exclusively so in statistical inference, for example. In the subjective probability literature, the agent's initial beliefs about an event $A \subset \Omega$ are usually  encapsulated in a single probability measure, that is then refined once new information in the form of data become available. As Walley points out in \cite[Section 1.1.4]{walley}, though, missing information and bounded rationality may prevent the agent from assessing probabilities precisely in practice, even if doing so is possible in principle. This may be due to the lack of information on how likely events of interest are,  lack of computational time or ability, or because it is extremely difficult to analyze a complex body of evidence. We call this condition faced by agent \textit{ambiguity} \cite{ellsberg}. Often times agents do not realize they face ambiguity, as observed in \cite{berger} and in the de Finetti lecture delivered at ISBA 2021. There, Berger points out how most people tend to under-report variance; the folklore says by a factor of $3$. People simply think that they know more than they actually do.


In the presence of ambiguity, the agent may only be able to specify a set $\mathcal{P}$ of probability measures that seem ``plausible'' or ``fit'' to express their initial opinion on the events of interest. 
Generally speaking, the farther apart (e.g. in the total variation distance) the ``boundary elements'' of $\mathcal{P}$ (i.e. its infimum and supremum), the higher the ambiguity faced by the agent. This way of proceeding, called the \textit{sensitivity analysis approach}, is further examined in Remark \ref{rem_on_lower_prev_and_sensitivity_approach}.



As Section \ref{sets} will discuss, the infima of the sets updated according to our DIPK procedure -- that, as we shall see, are called lower probabilities -- completely characterize the sets. That is why in Section \ref{proc} we give lower and upper bounds for the updated lower and upper probabilities (the conjugate of lower probabilities), respectively, and in Section \ref{behavior} we study the behavior of the updated sets (contraction, dilation, sure loss) by giving sufficient conditions involving lower (and upper) probabilities. 

\subsection{Probability kinematics}\label{why}
DPK and DIPK are rooted in probability kinematics (PK), also known as Jeffrey's rule of updating. PK can be seen as a generalization of Bayesian updating, the most famous and widely used technique to describe updating of beliefs. This latter prescribes the scholar to form an initial opinion on the plausibility of the event $A$ of interest, where $A$ is a subset of the state space $\Omega$, and to express it by specifying a probability measure $P$, so that $P(A)$ can be quantified. Once some data $E$ is collected, the Bayesian updating mechanism revises the initial opinion by applying the Bayes' rule
$$P^\star(A) \equiv P(A \mid E) = \frac{P(A \cap E)}{P(E)}=\frac{P(E \mid A) P(A)}{P(E)}\propto P(E \mid A) P(A),$$provided that $P(E) \neq 0$.\footnote{Conditioning on a zero probability event is technically possible, see e.g. literature on lexicographic probability \cite{blume} and layers of zero probabilities \cite{coletti}. We will consider this eventuality in future work, as pointed out in Remark \ref{zero_denom}.} In \cite{jeffrey3,jeffrey,jeffrey2}, Richard Jeffrey makes a compelling case of the fact that Bayes' rule is not the only reasonable way of updating. For example, its use presupposes that both $P(E)$ and $P(A \cap E)$ have been quantified before event $E$ takes place: this can be a very challenging task, for example when $E$ is not anticipated. Jeffrey points out that evidence is not always propositional (i.e. it may not be possible to represent it as a crisp subset); rather, it is oftentimes uncertain or partial.\footnote{Notice that when introducing PK, Jeffrey was not concerned about probabilities being precise: this was one of the main reasons why we introduce DIPK in section \ref{sets}.}  

Bayes' rule is not well-suited for the agent to face partial information. The following example illustrates a situation in which Bayes' rule is not \textit{directly} applicable to compute the updated probability of an event (we would need to enlarge the state space), but Jeffrey's rule can be applied. 


\begin{example}\label{ex_diac_zab}{\cite[Section 1.1]{diaconis_zabell}}
Three trials of a new surgical procedure are to be conducted at a hospital. Let $1$ denote a successful outcome, and $0$ an unsuccessful one. The state space has the form $\Omega=\{000,001,010,011,100,101,110,111\}$. A colleague informs us that another hospital performed this type of procedure $100$ times, registering $80$ successful outcomes. This information is relevant and should influence our opinion about the outcome of the three trials, but it cannot be put in direct terms  of the occurrence of an event in the original $\Omega$, thus Bayes' rule is not directly applicable. 


Since the description contains no information about the order of the three trials, our initial opinion $P$ assumes that they are exchangeable. That is, consider the partition $\{E_0,E_1,E_2,E_3\}$ of $\Omega$ where $E_j$ is the set of all outcomes with exactly $j$ successes, exchangeability implies that we assign equal probabilities to atomic events within each partition. In other words, $P(\{001\})=P(\{100\})=P(\{010\})$ and $P(\{110\})=P(\{101\})=P(\{011\})$.


The success rate at the other hospital informs our opinion over the partition $\{E_j\}$ only, and nothing more. In relation to our old opinion $P$, our updated opinion $P^\star$ satisfies $P(A \mid E_j)=P^\star(A \mid E_j)$ for all $A \subset \Omega$ and all $j \in \{0,\ldots,3\}$. Upon specifying a new subjective assessment of the $P^\star(E_j)$'s, the updated probability measure $P^\star$ can be fully reassessed by the relation 
$$P^\star(A)=\sum_{j=0}^3 P^\star(A \mid E_j)P^\star(E_j)=\sum_{j=0}^3 P(A \mid E_j)P^\star(E_j).$$
It is within our liberty to reassess the $P^\star(E_j)$'s. We may, for example, regard the three trials as a random subsample of size three from those of the other hospital. This would equate $P^\star(E_j)$ to the probability of obtaining $j$ successes from a Hypergeometric$(100, 80, 3)$ distribution.
\demo
\end{example}

The rule $P^\star(A)=\sum_{E_j \in \mathcal{E}} P(A \mid E_j)P^\star(E_j)$ is known as Jeffrey's rule of conditioning. It is valid when there is a partition $\mathcal{E}$ of the state space $\Omega$ such that
\begin{equation}\label{jeffrey_cond}
P^\star( A \mid E_j)=P(A \mid E_j), \quad \forall A \subset \Omega, \forall E_j \in \mathcal{E}.
\end{equation}
As pointed out in \cite[Section 6.11.8]{walley}, under assumption \eqref{jeffrey_cond}, Jeffrey's rule is a consequence of coherence. It is useful when new evidence cannot be identified with the occurrence of an event, but has the effect of changing the probabilities we assign to the events in partition $\mathcal{E}$. It has the practical advantage of reducing the assessment of $P^\star$ to the simpler task of assessing $P^\star(E_j)$, for all $E_j \in \mathcal{E}$. 
In the above example, instead of a full reassessment of probabilities on $\Omega$, the agent only needs to deliberate new assessment of the four probabilities $P^\star(E_0)$ through $P^\star(E_3)$ based on the given information.

To see that Jeffrey's rule of conditioning is a generalization of Bayes' rule, consider partition $\{E,E^c\}$, for some $E \subset \Omega$. Then if $P^\star(E)=1$, we have  that $P^\star(A)=P(A \mid E)P^\star(E)+P(A \mid E^c)P^\star(E^c)=P(A \mid E)$, which is Bayes' rule. In addition, as studied in \cite[Section 2]{diaconis_zabell}, if we are given the couple $\{P,P^\star\}$ of probability measures, we can always reconstruct a partition $\{E_j\}$ for which $\{P,P^\star\}$ could have arisen via Jeffrey's updating rule, unlike Bayesian conditionalization.

Let us now discuss the relation between DPK and Jeffrey's updating. The three main tasks in PK are:
\begin{itemize}
    \item[(1)] Collecting a partition $\mathcal{E}$ of state space $\Omega$;
    \item[(2)] Subjectively assess the probability $P^\star(E)$ to attach to the elements $E$ of partition $\mathcal{E}$;
    \item[(3)] Compute the update $P^\star(A)=\sum_{E\in\mathcal{E}} P(A\mid E)P^\star(E)$.
\end{itemize}
In DPK, we:
\begin{itemize}
    \item[(1')] Collect data points belonging to a generic set $\mathcal{X}$ that induce a partition  $\mathcal{E}$ of state space $\Omega$;
    \item[(2')] Mechanically attach probabilities to the elements of the induced partition;
    \item[(3')] Compute the update as in ``regular'' PK.
\end{itemize}
We allow the evidence observed by the agent to belong to a general set $\mathcal{X}$; data points are regarded as the realization of a random variable $X:\Omega \rightarrow \mathcal{X}$. Notice that if the distribution $P_X$ of $X$ were to be known, the elements of $\mathcal{X}$ would induce a unique partition $\mathcal{E}=\{E_j\}$ of $\Omega$, where $E_j=\{\omega\in\Omega:X(\omega)=x_j\}$ and $P^\star(E_j)=P_X(\{x_j\})$, for all $x_j\in\mathcal{X}$. Instead, to further capture the idea of partial information, we consider the case where $P_X$ is unknown. As we shall see, given data points $x_1,\ldots,x_n \in \mathcal{X}$, they induce a partition $\mathcal{E}=\{E_j\}_{j=1}^{m+1}$, $m\leq n$, where $m$ is the number of unique elements in $\{x_1,\ldots,x_n\}$, $E_j=\{\omega\in\Omega:X(\omega)=x_j\}$ for $j\in\{1,\ldots,m\}$, and $E_{m+1}=(\cup_{j=1}^m E_j)^c$. The relative frequency of $x_1,\ldots,x_n$ will induce the probability that the agent assigns to the elements of $\mathcal{E}$, making the update from $P$ to $P^\star$ mechanical. We inspect subsequent DPK updates in Section \ref{follow}.

\subsection{Structure of the paper}\label{structure}
The paper is organized as follows. In Section \ref{literature}, we discuss the connection between our work and the existing literature. Sections \ref{sec:1} and \ref{mech} introduce dynamic probability kinematics (DPK). In Section \ref{follow}, we explain how to subsequently update probability measure $P$ as more and more data become available. Section \ref{sets} presents dynamic imprecise probability kinematics (DIPK). In Section \ref{proc}, we give bounds for the upper and lower probabilities associated with the updated probability set, that are then put to use in Section \ref{behavior} to study the behavior of updated sets of probabilities, namely contraction, dilation, and sure loss. Section \ref{appl} presents two examples that illustrate how to implement DPK and DIPK, and Section \ref{concl} concludes our work. Appendix \ref{proofs} contains the proofs of our results.

\section{Related literature}\label{literature}

In this Section, we present some papers that deal with Jeffrey's updating in the context of imprecise probability models. Probability kinematics has been generalized to be put to use in the context of Dempster-Shafer theory, evidence theory, neighborhood models theory, possibility theory, maximum entropy theory, and credal sets theory. DIPK belongs to this last category.

In \cite{shafer}, Shafer discusses Jeffrey's updating from a philosophical perspective, and is the first to consider its application to the context of Dempster-Shafer theory, for which belief functions -- functions representing the degree of belief of the agent on a given event -- and Dempster's updating rule play a central operational role. In \cite{ichihashi} and \cite{smets} the authors further study the generalization of Jeffrey's updating for belief functions defined on a finite state space. In \cite{ichihashi}, the authors point out how Shafer's approach is different from the normative Bayesian approach and is not a straight generalization of Jeffrey's rule, so they propose rules of conditioning for which Jeffrey's rule is a direct consequence of a special case. In \cite{smets}, the author generalizes the results in \cite{ichihashi}. He shows that several forms of Jeffrey's updating rule can be defined so that they correspond to the geometrical rule of conditioning and to Dempster's rule of conditioning, respectively.

In \cite{ma}, the authors provide a generalization of both Jeffrey's rule and Dempster conditioning to propose an effective revision rule in the field of evidence theory. This is very interesting since when one source of evidence is less reliable than another, the idea is to let prior knowledge of an agent be altered only by some of the input information. The change problem is thus intrinsically asymmetric. To this extent, their model takes into account inconsistency between prior and input information. Other works that deal with a generalization of Jeffrey's rule within the framework of evidence theory are \cite{tang1}, in which the authors propose a generalization of probability kinematics where a priori knowledge and new evidence are all modeled by independent random sets, and \cite{tang2} in which a priori knowledge and evidences are modelled by a probability distribution and a
collection of multi-dimensional random sets, respectively.

In \cite{skulj}, the author discusses the application of Jeffrey's rule to neighborhood models theory. In this field, ambiguity is captured by neighborhood of a classical probability measure $P$, presented in the form of interval probabilities $[L,U]$. This means that $P(A) \in [L(A),U(A)]$, for all $A \subset \Omega$, where $\Omega$ is the state space of interest. The author shows that a neighborhood $[L,U]$ of a probability measure $P$ whose lower envelope $L$ is convex or bi-elastic with respect to the base probability measure  \cite[Definitions 3 and 4]{skulj} is closed with respect to Jeffrey’s rule of conditioning. This means that Jeffrey's posterior for $Q \in [L,U]$ still belongs to the interval.


Possibility theory \cite{zadeh} is a framework alternative to probability theory that is suitable for handling uncertain, imprecise and incomplete knowledge. In possibility theory, there are two different ways to define the conditioning depending on how possibility
degrees are interpreted, one called quantitative possibility and the other called qualitative possibility. In \cite{tabia}, the authors investigate the existence and uniqueness of the posterior probabilities computed according to a possibilistic counterpart of Jeffrey’s rule in both the quantitative and qualitative possibilistic frameworks.


In \cite{marchetti}, the authors generalize Jeffrey's rule to credal sets theory. The authors introduce imaginary kinematics \cite[Definition 7]{marchetti}. They combine Jeffrey's rule with Lewis' imaging \cite{lewis} for credal sets to be able to update beliefs when possibly inconsistent probabilistic evidence is gathered. Evidence on some variables is called \textit{inconsistent} when it contradicts certainty (or impossibility) in the agent's knowledge base. There are two main differences between our work and \cite{marchetti}:
\begin{enumerate}
    \item We consider an agent facing ambiguity who specifies a set of probability measures that encapsulates their initial beliefs, while \cite{marchetti} do not;
    \item In \cite{marchetti} the authors consider the instance in which gathered evidence is partial and possibly inconsistent, while we only deal with the former.
\end{enumerate}
In the future we will generalize DIPK by relaxing the (tacit) assumption that the gathered evidence is consistent.


It is worth noting that in \cite{ergo_dipk} the authors provide an ergodic theory for the limit of a sequence of successive DIPK updates of a set representing the initial beliefs of an agent. As a consequence, they formulate a strong law of large numbers. Those results are instrumental to increase the applicability of DIPK; for example, they underpin generalizations of classical MCMC procedures that allow for DIPK updating. 

\section{A new way of updating subjective beliefs}\label{sec:1}

In this and in the next Sections, we describe a new way of updating subjective beliefs based on Jeffrey's rule of conditioning \cite{diaconis_zabell,jeffrey3,jeffrey,jeffrey2}, which we call dynamic probability kinematics (DPK). Let $\Omega$ be the state space of interest, and assume it is at most countable. The version of DPK with uncountable $\Omega$ will be the subject of a future work.  Suppose that $P$ is a probability measure on $(\Omega,\mathcal{F})$ representing an agent's initial beliefs around the elements of $\mathcal{F}=2^\Omega$, and that we want to update it after collecting some data.\footnote{We assume $\mathcal{F}=2^\Omega$ to work with the richest possible sigma-algebra; all the results in this paper still hold if $\mathcal{F}$ is not the power set. $\Omega$ is assumed at most countable for simplicity: we want to focus on the updating mechanism and not on measure-theoretic complications.} The agent observes data points $x_1,\ldots,x_n$ that are realizations of a random quantity $X:\Omega \rightarrow \mathcal{X}$ whose distribution is unknown. 
Notice that collecting $x_1,\ldots,x_n$ is equivalent to observing $\omega_1,\ldots,\omega_n \sim Q$, where $Q$ is unknown, and then computing $X(\omega_i)=x_i$. Consider now the collection $\mathcal{E}^\prime:=\{E_i\}_{i=1}^n$, where $E_i\equiv X^{-1}(x_i):=\{\omega \in \Omega : X(\omega)=x_i\}$. It induces partition $\mathcal{E}=\{E_j\}_{j=1}^{m+1}$ of $\Omega$, $m\leq n$, whose first $m$ elements are the unique elements of $\mathcal{E}^\prime$, and $E_{m+1}=(\cup_{j=1}^m E_j)^c=\Omega\setminus\cup_{j=1}^m E_j$.

As an update to $P$, we propose
\begin{align}\label{our_upd}
\begin{split}
P_{\mathcal{E}}:\mathcal{F} \rightarrow [0,1] , \quad A \mapsto &P_{\mathcal{E}}(A) :=\sum_{E_j \in \mathcal{E}} P(A \mid E_j) P_{\mathcal{E}}(E_j)\\ \text{such that }& P_{\mathcal{E}}(E_j) \geq 0, \forall E_j \in \mathcal{E} \text{, and } \sum_{E_j \in \mathcal{E}} P_{\mathcal{E}}(E_j)=1.
\end{split}
\end{align}

We have the following.
\begin{proposition}\label{claim1}
$P_{\mathcal{E}}$ is a probability measure, and it is a Jeffrey's posterior for $P$. 
\end{proposition}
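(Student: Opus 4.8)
The plan is to establish the two assertions separately: first that $P_{\mathcal{E}}$ satisfies the Kolmogorov axioms, and second that it obeys the defining Jeffrey condition \eqref{jeffrey_cond} relative to $P$ on the partition $\mathcal{E}$. Throughout I would take as standing (and implicit in the setup) the assumption that $P(E_j) > 0$ for every $E_j \in \mathcal{E}$, so that each conditional probability $P(\cdot \mid E_j)$ is a bona fide probability measure; wherever $P(E_j) = 0$ one adopts the usual convention of dropping that term from the sum.

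For the first assertion, non-negativity is immediate, since $P_{\mathcal{E}}(A)$ is a finite sum of products of the non-negative quantities $P(A \mid E_j) \geq 0$ and $P_{\mathcal{E}}(E_j) \geq 0$. Normalization follows by taking $A = \Omega$: as $P(\Omega \mid E_j) = 1$ for each $j$, the defining formula collapses to $\sum_{E_j \in \mathcal{E}} P_{\mathcal{E}}(E_j) = 1$, which holds by hypothesis. Countable additivity would be inherited from the conditional measures: for any countable family of pairwise disjoint sets, each $P(\cdot \mid E_j)$ is countably additive, and $P_{\mathcal{E}}$ is a finite convex combination of these, hence itself countably additive. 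This part is routine.

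The crux is the second assertion, and the key computation is to evaluate $P_{\mathcal{E}}(A \cap E_k)$ and exploit the disjointness of the partition. Expanding the definition gives $P_{\mathcal{E}}(A \cap E_k) = \sum_{E_j \in \mathcal{E}} P(A \cap E_k \mid E_j) P_{\mathcal{E}}(E_j)$, and since $E_k \cap E_j = \emptyset$ for $j \neq k$, every summand vanishes except $j = k$, leaving $P_{\mathcal{E}}(A \cap E_k) = P(A \mid E_k)\, P_{\mathcal{E}}(E_k)$. Setting $A = \Omega$ in this identity also confirms the self-consistency of the definition, namely that the general formula reproduces the prescribed values $P_{\mathcal{E}}(E_k)$ rather than contradicting them. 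Dividing through then yields $P_{\mathcal{E}}(A \mid E_k) = P_{\mathcal{E}}(A \cap E_k)/P_{\mathcal{E}}(E_k) = P(A \mid E_k)$ whenever $P_{\mathcal{E}}(E_k) > 0$, which is precisely \eqref{jeffrey_cond}.

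The only genuine obstacle I anticipate is the bookkeeping around degenerate cases: the conditionals $P(\cdot \mid E_j)$ are undefined when $P(E_j) = 0$, and the Jeffrey identity can be asserted only for those $E_k$ carrying positive updated mass $P_{\mathcal{E}}(E_k) > 0$. I would dispatch these by the positivity convention above, observing that partition elements with zero updated mass impose no constraint in \eqref{jeffrey_cond}. Everything else reduces to the single-line use of partition disjointness in the display for $P_{\mathcal{E}}(A \cap E_k)$.
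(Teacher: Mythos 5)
Your proposal is correct, and its first half (the Kolmogorov axioms) matches the paper's proof essentially step for step: nonnegativity from the definition, normalization from $P(\Omega \mid E_j)=1$, and countable additivity by pushing the countable disjoint union through the conditional measures and interchanging the sums. The second half, however, takes a genuinely different route. The paper never verifies condition \eqref{jeffrey_cond} inside the proof; instead it invokes the characterization of \cite[Theorem 2.1]{diaconis_zabell} — $P^\star$ is a Jeffrey's posterior for $P$ if and only if $P^\star(\{\omega\}) \leq B\,P(\{\omega\})$ for some constant $B \geq 1$ and all $\omega$ — computing $P_\mathcal{E}(\{\omega\}) = \frac{P_\mathcal{E}(E_\omega)}{P(E_\omega)}P(\{\omega\})$ for the partition element $E_\omega$ containing $\omega$ and taking $B$ as a supremum of (ceilings of) the ratios $P_\mathcal{E}(E_j)/P(E_j)$. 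You instead prove $P_\mathcal{E}(A \cap E_k) = P(A \mid E_k)\,P_\mathcal{E}(E_k)$ by partition disjointness and divide, obtaining \eqref{jeffrey_cond} termwise. Your conclusion is in fact more informative: the Diaconis--Zabell criterion certifies only that $P_\mathcal{E}$ arises from $P$ by Jeffrey's rule relative to \emph{some} partition, whereas your computation exhibits $\mathcal{E}$ itself as the conditioning partition — which is the fact the paper actually uses, and which it argues only informally in the paragraph following the proposition (there, Jeffrey's condition is inferred from the equality of the two mixture representations $\sum_j P_\mathcal{E}(A \mid E_j)P_\mathcal{E}(E_j) = \sum_j P(A \mid E_j)P_\mathcal{E}(E_j)$, but equality of sums does not by itself yield termwise equality; your disjointness computation is the rigorous version of that remark). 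What the paper's route buys in exchange is the bounded-density connection to the Diaconis--Zabell machinery, which is reused in their later discussion; what yours buys is a self-contained, elementary argument with no appeal to an external theorem, plus the self-consistency check (at $A=\Omega$) that the defining formula reproduces the prescribed values $P_\mathcal{E}(E_k)$. Your treatment of the degenerate cases — assuming $P(E_j)>0$ and noting that elements with $P_\mathcal{E}(E_k)=0$ impose no constraint — is consistent with the tacit assumption $P(E_j)=0 \iff E_j = \emptyset$ that the paper itself records immediately after its proof, so no gap arises there.
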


In general, Jeffrey's rule of conditioning -- as presented in \cite[Equation 1.1]{diaconis_zabell} -- is given by $P^\star(A)=\sum_j P(A \mid E_j) P^\star (E_j)$, where $P^\star$ is Jeffrey's posterior for $P$. It is valid when Jeffrey's condition is met, that is, when there is a given partition $\{E_j\}$ of the state space $\Omega$ such that $P(A \mid E_j)=P^\star(A \mid E_j)$ is true for all $A \in \mathcal{F}$ and all $j$.  Specifically, this condition is met by $P_{\mathcal{E}}$. Since $P_{\mathcal{E}}$ is a probability measure by Proposition \ref{claim1}, it is true that, for all $A \in \mathcal{F}$, $P_{\mathcal{E}}(A)=\sum_{E_j \in \mathcal{E}} P_{\mathcal{E}}(A \mid E_j)P_{\mathcal{E}}(E_j)$. But given our definition for $P_{\mathcal{E}}$, we also have that $P_{\mathcal{E}}(A)=\sum_{E_j \in \mathcal{E}} P(A \mid E_j)P_{\mathcal{E}}(E_j)$. This implies that there is a partition $\mathcal{E}$ for which $P(A \mid E_j)=P_\mathcal{E} (A \mid E_j)$ is true for all $A \in \mathcal{F}$ and all $E_j \in \mathcal{E}$. 





\section{Computing $P_\mathcal{E}$ via an empirical specification}\label{mech}
In this Section, we show how to compute DPK updating for $P_{\mathcal{E}}(A)$ via an empirically specified sequence of partitions, which in turn determines a sequence of empirical probability measures. Utilizing it eases the analyst of the burden of making a full subjective probabilistic assessment for the elements of $\mathcal{E}$.

Recall that $\mathcal{E}^\prime=\{E_i\}_{i=1}^n=\{X^{-1}(x_i)\}_{i=1}^n$, and $\mathcal{E}=\{E_j\}_{j=1}^{m+1}$, where $E_1,\ldots,E_m$ are the unique elements of $\mathcal{E}^\prime$, and $E_{m+1}=(\cup_{j=1}^m E_j)^c$. Denote by $\Delta(\Omega,\mathcal{F})$ the set of all probability measures on $(\Omega,\mathcal{F})$. Then, consider the empirical probability measure $P^{emp}\in\Delta(\Omega,\mathcal{F})$ such that, if $E_{m+1}\neq\emptyset$,
\begin{equation}\label{emp_prob_1}
    P^{emp}(E_j)=\frac{1}{n+1} \sum_{i=1}^{n} \mathbb{I}(E_j=E_i), \quad \text{for all } j \in \{1,\ldots,m\},
\end{equation}
where $\mathbb{I}$ denotes the indicator function, and 
\begin{equation}\label{emp_prob_2}
   P^{emp}(E_{m+1})=1-\sum_{j=1}^{m} P^{emp}(E_j).
\end{equation}
If instead $E_{m+1}=\emptyset$,
\begin{equation}\label{emp_prob_4}
    P^{emp}(E_j)=\frac{1}{n} \sum_{i=1}^{n} \mathbb{I}(E_j=E_i), \quad \text{for all } j \in \{1,\ldots,m\}
\end{equation}
and 
\begin{equation}\label{emp_prob_5}
   P^{emp}(E_{m+1})=0.
\end{equation}
We require that
\begin{equation}\label{emp_prob_3}
    P_{\mathcal{E}}(E_j)=\beta(n) P(E_j) + \left[ 1-\beta(n) \right] P^{emp}(E_j), \quad \forall E_j\in\mathcal{E},
\end{equation}
where $\beta(n)$ is a coefficient in $[0,1]$ depending on $n$: the posterior probability $P_\mathcal{E}$ assigned to the elements $E_j$ of partition $\mathcal{E}$ is a weighted average of the prior $P$ and the empirical probability measure $P^{emp}$. Performing the update in \eqref{our_upd} then becomes a mechanical procedure, making Jeffrey's updating procedure easier to carry out.  DPK is driven by the coefficient $\beta(n)$, which is specified by the agent and controls the extent of prior-data tradeoff in the updated belief. The closer $\beta(n)$ is to $1$, the ``stickier'' DPK is; that is, the less the collected observations influence the agent's (revised) beliefs, and vice versa the closer $\beta(n)$ is to $0$. The facts that DPK is mechanical and that its stickiness is regulated by a parameter that is entirely under the agent's control makes our updating procedure mathematically and conceptually appealing.

In the remainder of this paper, we are going to use the procedure we just described to assign updated probabilities to the elements of $\mathcal{E}$. An example of how to update subjective beliefs according to DPK is given in section \ref{surgery}.

\begin{remark}\label{subtle}
There is a subtlety in moving from $P$ to $P_\mathcal{E}$. Let $\breve{P}:=\beta(n) P + [1-\beta(n)] P^{emp}$. Requiring that $P_\mathcal{E}(E_j)=\beta(n) P(E_j) + [1-\beta(n)] P^{emp}(E_j)$, for all $E_j\in\mathcal{E}$, means that the restriction $ P_\mathcal{E} |_{\sigma(\mathcal{E})}$ of $P_\mathcal{E}$ agrees with the restriction $ \breve{P} |_{\sigma(\mathcal{E})}$ of $\breve{P}$ on the sigma algebra $\sigma(\mathcal{E})$ generated by the elements of $\mathcal{E}$. $P_\mathcal{E} |_{\sigma(\mathcal{E})}$ is then extended to $(\Omega,\mathcal{F})$ through $P(\cdot \mid E_j)$, for all $j$, via 
$$P_\mathcal{E}(A)=\sum_{E_j \in\mathcal{E}}P(A \mid E_j)P_\mathcal{E}(E_j).$$
\end{remark}

\section{Subsequent updates}\label{follow}
Let us denote the amount of data available at time $t=1$ by $n_1$. Once at time $t=2$ we observe new data points $x_{n_1+1},\ldots,x_{n_2}$, we update $P_\mathcal{E} \equiv P_{\mathcal{E}_1}$ to $P_{\mathcal{E}_1 \mathcal{E}_2}$ via the same mechanical procedure depicted in Section \ref{mech}. With this, we mean the following. We now have observed data $x_1,\ldots x_{n_1}, x_{n_1+1},\ldots,x_{n_2}$. Then, we consider partition $\mathcal{E}_2=\{E_j\}_{j=1}^{k+1}$, where $E_1,\ldots,E_k$ are the unique elements in the collection $\mathcal{E}^{\prime\prime}=\{E_i\}_{i=1}^{n_2}=\{X^{-1}(x_i)\}_{i=1}^{n_2}$, and $E_{k+1}=(\cup_{j=1}^k E_k)^c$. We equate $P_{\mathcal{E}_1 \mathcal{E}_2}(E_j)=\beta(n_2) P_{\mathcal{E}_1}(E_j) + [1-\beta(n_2)] P^{emp}_2(E_j)$, for all $E_j\in\mathcal{E}_2$, where the $P^{emp}_2(E_j)$'s are computed similarly to \eqref{emp_prob_1}--\eqref{emp_prob_5}, so we have
$$P_{\mathcal{E}_1 \mathcal{E}_2}(A)=\sum_{E_j \in \mathcal{E}_2} P_{\mathcal{E}_1}(A \mid E_j) P_{\mathcal{E}_1 \mathcal{E}_2}(E_j).$$
Clearly, Proposition \ref{claim1} is true also for $P_{\mathcal{E}_1 \mathcal{E}_2}$.


Call $ (P_{\mathcal{E}_1 \cdots \mathcal{E}_t})$ the sequence of successive updates of probability measure $P$ representing the initial subjective beliefs of the agent around the elements of $\Omega$, and $\mathbf{x}_t=\{x_i\}_{i=1}^{n_t}$ the collection of data points available at time $t$. Notice that 
$$\# \mathcal{E}_t=\#\text{unique}(\mathbf{x}_t)+1,$$
where $\#$ denotes the cardinality operator. That is, the number of elements of partition $\mathcal{E}_t$ is a function of the collected observations up to time $t$; in particular, it is equal to the number of unique observations $x_i$ plus $1$, the complementary of the union of the other elements of $\mathcal{E}_t$. In the remainder of the paper, for notational convenience, we write $P_{\mathcal{E}_t}$ in place of $P_{\mathcal{E}_1 \cdots \mathcal{E}_t}$, for all $t\in\mathbb{N}$.

\begin{remark}\label{reltionship_n_and_t}
Notice that, for all $t\in\mathbb{N}$, $n_t>n_{t-1}$, and $n_0=0$. That is, the amount of data points available at time $t$ is always larger than that at time $t-1$;  
this implies that as $t\rightarrow\infty$, then $n_t\rightarrow\infty$. In addition, we have that $P_{\mathcal{E}_t}$ depends on $n_1,\ldots,n_t$ and $P_{\mathcal{E}_0}$; we denote this by $P_{\mathcal{E}_t} \equiv P_{\mathcal{E}_t}(n_1,\ldots,n_t,P_0)$. To show this, we write $P_{\mathcal{E}_2}$ in terms of $n_1$, $n_2$, and $P_0$. We assume that $E_{k+1}\neq \emptyset$, so the following holds
\begin{align}\label{how_2_deps_on_prior}
\begin{split}
    P_{\mathcal{E}_2}(A)=\sum_{E\in\mathcal{E}_2} &P_{\mathcal{E}_1}(A\mid E) \left[ \beta(n_2) P_{\mathcal{E}_1}(E) + (1-\beta(n_2)) \frac{1}{n_2+1} \sum_{s=1}^{n_2} \mathbb{I}(\tilde{E}_s=E) \right]\\
    =\sum_{E\in\mathcal{E}_2} &\Bigg\{ \frac{\sum_{E^\prime\in\mathcal{E}_1} P_{\mathcal{E}_0}(A\cap E \mid E^\prime) \left[ \beta(n_1) P_{\mathcal{E}_0}(E^\prime) + (1-\beta(n_1)) \frac{1}{n_1+1} \sum_{i=1}^{n_1} \mathbb{I}(\check{E}_i=E^\prime) \right]}{\sum_{E^\prime\in\mathcal{E}_1} P_{\mathcal{E}_0}(E \mid E^\prime) \left[ \beta(n_1) P_{\mathcal{E}_0}(E^\prime) + (1-\beta(n_1)) \frac{1}{n_1+1} \sum_{i=1}^{n_1} \mathbb{I}(\check{E}_i=E^\prime) \right]}\\
    &\cdot \left[ \beta(n_2) \sum_{E^\prime\in\mathcal{E}_1} P_{\mathcal{E}_0}(E \mid E^\prime) \left( \beta(n_1) P_{\mathcal{E}_0}(E^\prime) + (1-\beta(n_1)) \frac{1}{n_1+1} \sum_{i=1}^{n_1} \mathbb{I}(\check{E}_i=E^\prime) \right) \right.\\
    &\left.+ (1-\beta(n_2)) \frac{1}{n_2+1} \sum_{s=1}^{n_2} \mathbb{I}(\tilde{E}_s=E) \right] \Bigg\} \text{, } \check{E}_i\in\mathcal{E}^\prime, \tilde{E}_s \in \mathcal{E}^{\prime\prime}.
\end{split}
\end{align}
It is easy to see how this can be generalized to any $t>2$. In the remainder of the paper, for notational convenience we write $P_{\mathcal{E}_t}$ in place of $P_{\mathcal{E}_1\cdots\mathcal{E}_t}(n_1,\ldots,n_t,P_0)$, and $P_{\mathcal{E}_t}(A)$ in place of $P_{\mathcal{E}_1\cdots\mathcal{E}_t}(n_1,\ldots,n_t,P_0;A)$, for all $A\in\mathcal{F}$.
\end{remark}

A consequence of how we build partitions is that, for any $t$, $\mathcal{E}_t$ is not coarser than $\mathcal{E}_{t-1}$. To see this, suppose $\mathcal{E}_{t-1}$ has $\ell+1$ many elements, that is, $\mathcal{E}_{t-1}=\{E_1^{\mathcal{E}_{t-1}},\ldots,E_\ell^{\mathcal{E}_{t-1}},E_{\ell+1}^{\mathcal{E}_{t-1}}\}$. As we know, this means that $E_{\ell+1}^{\mathcal{E}_{t-1}}=(\cup_{j=1}^\ell E_j^{\mathcal{E}_{t-1}})^c$. Now suppose that in the next updating step we only observe one element $x$. If it is not a ``novelty'', then $\mathcal{E}_t=\mathcal{E}_{t-1}$. If instead $x$ is a new element, we have that $\mathcal{E}_{t}$ has $\ell+2$ many elements. In particular, $E_j^{\mathcal{E}_{t-1}}=E_j^{\mathcal{E}_{t}}$, for all $j \in \{1,\ldots,\ell\}$, and $E_{\ell+1}^{\mathcal{E}_{t-1}}=E_{\ell+1}^{\mathcal{E}_{t}} \cup E_{\ell+2}^{\mathcal{E}_{t}}$. Of course, if we observe more elements, we further refine $E_{\ell+1}^{\mathcal{E}_{t-1}}$. 
\begin{proposition}\label{limit}
There exists a partition $\tilde{\mathcal{E}}$ that cannot be refined as a result of the updating process described in Sections \ref{sec:1} and \ref{mech}.
\end{proposition}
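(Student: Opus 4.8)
The plan is to first pin down exactly what the limiting partition $\tilde{\mathcal{E}}$ is, and then observe that the single mechanism by which the updating process can refine a partition — namely, subdividing the remainder set — is unavailable once that remainder set has collapsed to $\emptyset$.

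First I would characterize $\tilde{\mathcal{E}}$. By the construction of Sections \ref{sec:1} and \ref{mech}, at every stage the partition consists of the already-observed atoms $X^{-1}(x_j)$ together with a single remainder set equal to the complement of their union. As noted in the discussion preceding the statement, each refinement step leaves the observed atoms untouched and only subdivides the current remainder, peeling off new atoms $X^{-1}(x)$ as previously unseen values $x \in \mathcal{X}$ are observed. Passing to the limit, every value in $\mathcal{X}$ is eventually observed, so $\tilde{\mathcal{E}} = \{ X^{-1}(x) : x \in \mathcal{X} \} \cup \{ \emptyset \}$, where the empty set plays the role of the exhausted remainder.

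Next I would verify that the remainder set of $\tilde{\mathcal{E}}$ is genuinely $\emptyset$. Since $\mathcal{X}=\text{Im}_X(\text{Dom}(X))$ is the range of $X$, every $\omega \in \Omega$ satisfies $X(\omega) \in \mathcal{X}$, whence $\Omega = \bigcup_{x \in \mathcal{X}} X^{-1}(x)$. Therefore the remainder set $\left( \bigcup_{x \in \mathcal{X}} X^{-1}(x) \right)^c = \Omega^c = \emptyset$, consistent with the stipulation $\emptyset \in \tilde{\mathcal{E}}$. I would then close the argument as follows. By construction the process separates states of $\Omega$ solely according to their $X$-value, so an observed atom $X^{-1}(x) \in \tilde{\mathcal{E}}$ can never be subdivided: two states lying in the same $X^{-1}(x)$ share the value $x$ and are indistinguishable to the process. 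The only element ever available for subdivision is the remainder set, and subdividing it requires observing a hitherto-unseen value $x \in \mathcal{X}$. In $\tilde{\mathcal{E}}$ no such value exists, since every $x \in \mathcal{X}$ already has $X^{-1}(x) \in \tilde{\mathcal{E}}$; equivalently, the remainder set $\emptyset$ has no nonempty subset to peel off. Hence no refinement of $\tilde{\mathcal{E}}$ is possible.

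The main obstacle is essentially bookkeeping rather than a genuine difficulty: one must make the characterization of $\tilde{\mathcal{E}}$ precise and argue cleanly that refinement acts \emph{only} on the remainder set, so that emptiness of the remainder yields a fixed point of the process. Some care is warranted when $\mathcal{X}$ is countably infinite, since then no finite stage exhausts $\mathcal{X}$ and $\tilde{\mathcal{E}}$ must be understood genuinely as the limit of the refining sequence $\mathcal{E}_n$; but both the characterization $\tilde{\mathcal{E}} = \{ X^{-1}(x) : x \in \mathcal{X} \} \cup \{ \emptyset \}$ and the emptiness of its remainder set persist in the limit, so the conclusion is unaffected.
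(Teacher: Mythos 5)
Your proposal is correct and takes essentially the same route as the paper's proof: both arguments hinge on the equivalence between $\emptyset \in \tilde{\mathcal{E}}$ and the exhaustion of $\mathcal{X}$ (so that $\bigcup_{E_j \in \tilde{\mathcal{E}}\setminus\{\emptyset\}} E_j = \Omega$), and then conclude that the sole refinement mechanism --- peeling a new atom $X^{-1}(x)$ off the remainder set upon observing an unseen $x \in \mathcal{X}$ --- is no longer available. Your write-up simply spells out, via the explicit characterization $\tilde{\mathcal{E}} = \{X^{-1}(x) : x \in \mathcal{X}\} \cup \{\emptyset\}$, the step the paper dismisses as ``immediate to see.''
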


We now show how, under mild standard assumptions, the sequence of successive subjective beliefs updated according to the DPK procedure converges. Call $Q_{\tilde{\mathcal{E}}}$ the restriction of probability measure $Q$ introduced in Section \ref{sec:1} to the sigma algebra $\sigma(\tilde{\mathcal{E}})$ generated by the elements of $\tilde{\mathcal{E}}$. That is, $Q_{\tilde{\mathcal{E}}}:=\left. Q \right|_{\sigma(\tilde{\mathcal{E}})}$, $Q_{\tilde{\mathcal{E}}}:\sigma(\tilde{\mathcal{E}}) \rightarrow [0,1]$. Call then $\mathscr{Q}$ the collection of extensions of $Q_{\tilde{\mathcal{E}}}$ from $\sigma(\tilde{\mathcal{E}})$ to $\mathcal{F}=2^\Omega$. Notice that $\mathscr{Q} \neq \emptyset$ and that $\mathscr{Q}$ is a singleton if and only if $\overline{\mathcal{F}}^{Q_{\tilde{\mathcal{E}}}}=\mathcal{F}$, where
$$\overline{\mathcal{F}}^{Q_{\tilde{\mathcal{E}}}}:=\left\lbrace{ A\in 2^\Omega : Q_{\tilde{\mathcal{E}} \star}(A)=Q_{\tilde{\mathcal{E}}}^\star(A) }\right\rbrace$$
is the $Q_{\tilde{\mathcal{E}}}$-completion of $\sigma(\tilde{\mathcal{E}})$, and $Q_{\tilde{\mathcal{E}} \star}$ and $Q^\star_{\tilde{\mathcal{E}}}$ are the inner and outer measures induced by $Q_{\tilde{\mathcal{E}}}$, respectively. Recall that the total variation distance $d_{TV}$ is defined as
$$d_{TV}(\pi,\gamma):=\sup_{A\in\mathcal{F}} \left| \pi(A)-\gamma(A) \right|,$$
for all $\pi,\gamma\in\Delta(\Omega,\mathcal{F})$.

\begin{theorem}\label{prop3}
If $\mathbb{E}(X)<\infty$, $\lim_{n_t\rightarrow\infty} \beta(n_t)=0$, and $[1-\beta(n_t)]/n_t = O({1}/{n_t})$, then $P_{\mathcal{E}_t}$ converges to an element of $\mathscr{Q}$ with probability $1$ as $n_t \rightarrow \infty$ in the total variation distance.
\end{theorem}

Because as $n_t$ grows to infinity the partition induced by collection $\{X^{-1}(x_i)\}_{i=1}^{n_t}$ approaches $\tilde{\mathcal{E}}$, we denote by $P_{\tilde{\mathcal{E}}}$ the limit we find in Theorem \ref{prop3}.

\begin{remark}\label{zero_denom}
We tacitly assumed that for all nonempty $A\in\mathcal{F}$, the probability assigned to $A$ by $P$ (representing the agent's initial beliefs) is positive. In formulas, 
\begin{equation}\label{tacit_assumption}
    P(A)>0 , \quad \text{ for all } \emptyset\neq A \in\mathcal{F}.
\end{equation}
This assumption is not too stringent. For example, suppose the agent specifies $P$ so that there is a collection of sets $\{A^\prime_k\}\subset \mathcal{F}$ such that (i) $A^\prime_k\neq\emptyset$ and $P(A^\prime_k)=0$, for all $k$, and (ii) set $\mathcal{A}^\prime:=\cup_k A^\prime_k$ is finite. Then, the agent should choose $\tilde{P}=(1-\epsilon)P+\epsilon U$ as probability encapsulating their initial beliefs, where $U$ is a uniform on all elements with zero atomic probabilities -- that is, a uniform on  $\mathcal{A}^\prime$ -- and $\epsilon$ is an arbitrarily small element of $(0,1)$. This procedure -- a particular case of $\epsilon$-contamination \cite{berger4,berger5,huber3,huber} where the contaminating distribution is a uniform, sometimes referred to as \textit{padding} \cite{padding} -- keeps the initial beliefs essentially unaltered, and avoids complications coming from conditioning on zero probability events. 
In the future, we plan to deal with the delicate matter of conditioning on zero probability events in a more sophisticated way, possibly using techniques from the literature on lexicographic probabilities \cite{blume} or layers of zero probabilities \cite{coletti}.

\end{remark}

\begin{remark}\label{commutativity}
Dynamic probability kinematics is not commutative. With this we mean the following. Consider an initial probability $P$ and compute its dynamic probability kinematics update $P_{\mathcal{E}_1}$ based on partition $\mathcal{E}_1$; then compute the DPK update of $P_{\mathcal{E}_1}$ based on partition $\mathcal{E}_2$, and call this update $P_{\mathcal{E}_1\mathcal{E}_2}$. If we proceed in the opposite direction, that is, if we first update $P$ to $P_{\mathcal{E}_2}$, and then update this latter to $P_{\mathcal{E}_2\mathcal{E}_1}$, we have that, in general, $P_{\mathcal{E}_1\mathcal{E}_2} \neq P_{\mathcal{E}_2\mathcal{E}_1}$. To see this, consider the following scenario.  Let $\mathcal{E}_1$ be the partition induced by observations $x_1,\ldots,x_{n_1}$, and $\mathcal{E}_2$  the partition induced by observations $x_1,\ldots,x_{n_1},x_{n_2=n_1+1}$, where $x_{n_2}=x_{n_1}$. This means that $\mathcal{E}_1=\mathcal{E}_2$, but $P_{\mathcal{E}_1\mathcal{E}_2}(E)\neq P_{\mathcal{E}_2\mathcal{E}_1}(E)$, for all $E\in \mathcal{E}_1=\mathcal{E}_2$.  To illustrate this, let $\#\mathcal{E}_1=\#\mathcal{E}_2=m+1$, $m\leq n_1$, assume $E_{m+1}\neq\emptyset$, and notice that
\begin{equation}\label{not_comm_1}
    P_{\mathcal{E}_1\mathcal{E}_2}(E_j)=\beta(n_1+1) P_{\mathcal{E}_1}(E_j) + \frac{1-\beta(n_1+1)}{n_1+2}\sum_{i=1}^{n_1+1}\mathbb{I}(E_j=X^{-1}(x_i)), \quad j \in \{1,\ldots,m\}
\end{equation}
and 
\begin{equation}\label{not_comm_2}
    P_{\mathcal{E}_1\mathcal{E}_2}(E_{m+1})=1-\sum_{j=1}^m P_{\mathcal{E}_1\mathcal{E}_2}(E_j).
\end{equation}
Instead, suppose that we first update according to $\mathcal{E}_2$ and then according to $\mathcal{E}_1$. This may happen if we lose data point $x_{n+1}$, for example because of a transcription error. Then we have
\begin{equation}\label{not_comm_3}
    P_{\mathcal{E}_2\mathcal{E}_1}(E_j)=\beta(n_1) P_{\mathcal{E}_1}(E_j) + \frac{1-\beta(n_1)}{n_1+1}\sum_{i=1}^{n_1}\mathbb{I}(E_j=X^{-1}(x_i)), \quad j \in \{1,\ldots,m\}
\end{equation}
and 
\begin{equation}\label{not_comm_4}
    P_{\mathcal{E}_2\mathcal{E}_1}(E_{m+1})=1-\sum_{j=1}^m P_{\mathcal{E}_2\mathcal{E}_1}(E_j).
\end{equation}
As we can see, $P_{\mathcal{E}_1\mathcal{E}_2}(E_j) \neq P_{\mathcal{E}_2\mathcal{E}_1}(E_j)$, $j \in \{1,\ldots,m\}$, and $P_{\mathcal{E}_1\mathcal{E}_2}(E_{m+1})\neq P_{\mathcal{E}_2\mathcal{E}_1}(E_{m+1})$. 

In \cite[Section 3]{diaconis_zabell}, the authors study when Jeffrey's update is commutative. As we shall see, their results cannot be directly applied to DPK. In \cite[Theorem 3.1]{diaconis_zabell}, the authors show that, given two generic  partitions $\mathcal{E}$ and $\mathcal{G}$, if
\begin{equation}\label{suff_cond_d&z}
    P_{\mathcal{E}\mathcal{G}}(E)=P_{\mathcal{E}}(E) \quad \text{and} \quad P_{\mathcal{G}\mathcal{E}}(G)=P_{\mathcal{G}}(G),
\end{equation}
for all $E\in\mathcal{E}$ and all $G\in\mathcal{G}$, then $P_{\mathcal{E}\mathcal{G}}=P_{\mathcal{G}\mathcal{E}}$. We give now a simple counterexample to show that the sufficient condition does not hold for DPK.

Suppose that we observe $x_1=1$, $x_2=x_3=5$, $x_4=7$, and $x_5=8$. They induce partition $\mathcal{E}_1=\{E_j^{\mathcal{E}_1}\}_{j=1}^5$ whose elements are $E_1^{\mathcal{E}_1}=X^{-1}(1)$, $E_2^{\mathcal{E}_1}=X^{-1}(5)$, $E_3^{\mathcal{E}_1}=X^{-1}(7)$, $E_4^{\mathcal{E}_1}=X^{-1}(8)$, and $E_5^{\mathcal{E}_1}=(\cup_{j=1}^4 E_j^{\mathcal{E}_1})^c$. The empirical probabilities assigned to the elements of $\mathcal{E}_1$ according to \eqref{emp_prob_1} and \eqref{emp_prob_2} are $P^{emp}_1(E_j^{\mathcal{E}_1})=1/6$, $j\in\{1,3,4,5\}$, and $P^{emp}_1(E_2^{\mathcal{E}_1})=1/3$. Now, suppose that we observe a new data point $x_6=11$, so that $x_1,\ldots,x_6$ induce a new partition $\mathcal{E}_2=\{E_j^{\mathcal{E}_2}\}_{j=1}^6$ whose elements are such that $E_j^{\mathcal{E}_2}=E_j^{\mathcal{E}_1}$ for $j\in\{1,\ldots,4\}$,  $E_5^{\mathcal{E}_2}=X^{-1}(11)$, and $E_6^{\mathcal{E}_2}=(\cup_{j=1}^5 E_j^{\mathcal{E}_2})^c$. As we can see, $E_5^{\mathcal{E}_2}\cup E_6^{\mathcal{E}_2}=E_5^{\mathcal{E}_1}$, so $\mathcal{E}_2$ is a refinement of $\mathcal{E}_1$. The empirical probabilities assigned to the elements of $\mathcal{E}_2$ according to \eqref{emp_prob_1} and \eqref{emp_prob_2} are $P^{emp}_2(E_j^{\mathcal{E}_2})=1/7$, $j\in\{1,3,4,5,6\}$, and $P^{emp}_2(E_2^{\mathcal{E}_2})=2/7$. Then, we have that
\begin{align*}
   P_{\mathcal{E}_1 \mathcal{E}_2}(E_1^{\mathcal{E}_1})&=P_{\mathcal{E}_1 \mathcal{E}_2}(E_1^{\mathcal{E}_2})=\beta(6) P_{\mathcal{E}_1}(E_1^{\mathcal{E}_1}) + [1-\beta(6)] 1/7\\ &\neq P_{\mathcal{E}_1}(E_1^{\mathcal{E}_1})=\beta(5) P(E_1^{\mathcal{E}_1}) + [1-\beta(5)] 1/6,
\end{align*}
which does not meet condition \eqref{suff_cond_d&z}.


In \cite[Theorem 3.2]{diaconis_zabell}, the authors show that $P_{\mathcal{E}\mathcal{G}}=P_{\mathcal{G}\mathcal{E}}$ if and only if $\mathcal{E}$ and $\mathcal{G}$ are Jeffrey-independent, that is, if and only if $P_\mathcal{E}(G)=P(G)$ and $P_\mathcal{G}(E)=P(E)$, for all $E\in\mathcal{E}$ and all $G\in\mathcal{G}$. The underlying implicit assumption to this result, though, appears to be the fact that $P(E\cap G)>0$, for all $E$ and all $G$. As it is immediate to see, this does not hold in our case, so we cannot use this result to check the commutativity of DPK updates. For example, if $\mathcal{E}=\mathcal{G}$, pick any $E_1,E_2\in\mathcal{E}$, $E_1\neq E_2$. Then, $E_1\cap E_2=\emptyset$, and so $P(E_1\cap E_2)=0$.

Should the lack of commutativity worry the agent that intends to update their beliefs using DPK? The answer is no. Since successive partitions are induced by an increasing amount of collected data points, commutativity would mean that losing data yields no loss of information on the likelihood of the event $A \subset \Omega$ of interest. This is undesirable: the more we know about the composition of $\Omega$, the better we want our assessment to be on the plausibility of event $A$. As Diaconis and Zabell point out in \cite[Section 4.2, Remark 2]{diaconis_zabell}, ``noncommutativity is not a real problem for successive Jeffrey updating''; it is not a real problem for DPK either.

Before concluding this Remark, we mention how, despite DPK is not in general commutative, the limit probability $P_{\tilde{\mathcal{E}}}$ is the same regardless of the order in which data is collected. 
Suppose we collect observations in a different order in two different procedures. Call $(\mathcal{E}_t)$ and $(\mathcal{E}_t^\prime)$ the sequences of successive partitions in the first and second procedures, respectively, and $\tilde{\mathcal{E}}$ and $\tilde{\mathcal{E}}^\prime$ the limit partitions for the first and second procedures, respectively. 
\begin{proposition}\label{data_order}
Suppose $\lim_{n_t\rightarrow\infty}\beta(n_t)=0$, $[1-\beta(n_t)]/n_t=O(1/n_t)$, and $\mathbb{E}(X)<\infty$. Call $P_{\tilde{\mathcal{E}}}$ the almost sure limit of $(P_{\mathcal{E}_t})$ and $P_{\tilde{\mathcal{E}}^\prime}$ the almost sure limit of $(P_{\mathcal{E}^\prime_t})$ in the total variation metric as $n_t$ goes to infinity. Then, $P_{\tilde{\mathcal{E}}}= P_{\tilde{\mathcal{E}}^\prime}$.
\end{proposition}
\end{remark}

\begin{remark}\label{path_independence}
    In this remark we discuss an appealing choice of $\beta(n_t)$. 
    Let us first first describe the update from $t=0$ to $t=1$. Suppose at time $t=0$ our prior $P\equiv P_{\mathcal{E}_0}$ has \textit{confidence index} $C_0=\mathscr{K}$, where $\mathscr{K} \in \mathbb{N}$ is the prior sample size of $P$ \cite{meng_prior}. We collect observations $x_1,\ldots,x_{n_1}$, and we put
    $$\beta(n_1)=\frac{C_0}{C_0+n_1},$$
    so that 
    $$P_{E_1}(E)=\frac{C_0}{C_0+n_1}P_{\mathcal{E}_0}(E)+\frac{n_1}{C_0+n_1}P^{emp}_1(E), \quad \forall E \in\mathcal{E}_1.$$
    The confidence index is then updated to $C_1=C_0+n_1$. 
    In general, we have that  
    $$\beta(n_t)=\frac{C_{t-1}}{C_{t-1}+n_t-n_{t-1}} \quad \text{and} \quad C_t=\begin{cases}
    \mathscr{K} & \text{for } t=0\\
    C_{t-1}+n_t-n_{t-1} & \text{for } t\geq 1
    \end{cases},$$
    where $n_0=0$ by convention.\footnote{Because $\beta(n_t)$ depends on the prior sample size $\mathscr{K}$ of $P$, for all $t$, for notational clarity we should write $\beta(n_t,P)$. We do not do so to lighten the notation and to make it consistent with the rest of the paper.} Notice that $\beta(n_t)$ can be rewritten as $C_{t-1}/C_t$, so it can be expressed as the \textit{relative prior confidence}: the closer it is to $1$, the less the collected observations influence our previous opinion, and so the stickier the DPK update is. The opposite holds the closer $\beta(n_t)$ is to $0$. Notice also that $\lim_{n_t\rightarrow\infty}\beta(n_t)=0$ and $[1-\beta(n_t)]/n_t=O(1/n_t)$, so Theorem \ref{prop3} can be applied, provided that we assume $\mathbb{E}(X)<\infty.$
\end{remark}

\begin{remark}
In PK, an agent's subjective probabilities over a fixed partition undergo a change (a \textit{Jeffrey shift}), which is then propagated across the rest of their probabilities in a natural manner. Crucially, PK does not specify what Jeffrey shift an agent's probabilities will undergo; it treats the choice of the Jeffrey shift as an input to the rule rather than part of the rule itself. Indeed, in the original interpretation of PK, the shift is usually a non-inferential change to the agent's degrees of belief that is not chosen consciously or freely, but rather e.g. the brute result of a perceptual process. 

DPK is an updating technique that sits in between Bayes' and Jeffrey's rules. It can be seen, heuristically, as a map from specifications of statistical problems to choices of Jeffrey shift (which are then propagated in the usual way, via PK). While it is built as a particular case of PK, it uses the empirical distribution to assign probabilities to the elements of the updated partition $\mathcal{E}_t$. In order to mechanize the procedure, it gives up the freedom of choosing whatever probability the agent feels correct to assign to the elements of $\mathcal{E}_t$. At the same time, if evidence is collected that does not belong to $\Omega$, that is, if $\mathcal{X}\not\subset\Omega$, then using the inverse image $X^{-1}$ of function $X$, DPK allows one to update their beliefs without first needing to enlarge $\Omega$ to $\Omega^\prime=\Omega\times\mathcal{X}$. Notice also that, being a particular case of PK, DPK updates can be obtained by Bayesian updating in a larger space $\Omega^\prime$ \cite[Theorem 2.1]{diaconis_zabell}.\footnote{In \cite{diaconis_zabell}, the authors show that there exists a ``duality'' between Bayes' rule (BR) and PK. BR can be seen as a special case of PK, as we pointed out in section \ref{why}, while at the same time we can obtain PK from BR if we enlarge the state space.} There are two main reasons for not wanting to enlarge the state space:
\begin{itemize}
    \item reassessing our beliefs on a larger space requires us to extend our beliefs from the elements of $2^\Omega$ to those of $2^{\Omega^\prime}$; we can do so using Halmos' extension \cite[Exercise 48.4]{halmos}, \cite[Section 4.13]{billingsley};\footnote{For the imprecise version of DPK, that is, for DIPK, we can extend the agent's beliefs via Walley's extension \cite[Chapter 3]{walley}.}
    \item updating probabilities on a larger sigma-algebra can be computationally costly.
\end{itemize}
Besides simplifying the updating procedure by not requiring an enlarged state space, we also conjecture that DPK simplifies the treatment of nuisance parameters, a statement that will be verified in future work.

\end{remark}

\section{Working with sets of probabilities}\label{sets}
In this Section, we generalize dynamic probability kinematics to dynamic imprecise probability kinematics (DIPK). To do so, we first need to introduce the concepts of lower probability, upper probability, and core of a lower probability.

\subsection{Concepts}\label{concepts}
Consider a generic set of probabilities $\Pi$ on a measurable space $(\Omega,\mathcal{F})$. The lower probability of $A$ associated with $\Pi$ is defined as 
$$\underline{P}(A):=\inf_{P \in \Pi} P(A), \quad \forall A \in \mathcal{F}.$$
The upper probability of $A$ associated with $\Pi$ is defined as the conjugate to $\underline{P}(A)$, that is,  
$$\overline{P}(A):=1-\underline{P}(A^c)=\sup_{P^\prime \in \Pi} P^\prime(A), \quad \forall A \in \mathcal{F}.$$
Recall that $\Delta(\Omega,\mathcal{F})$ denotes the set of all probability measures on $(\Omega,\mathcal{F})$. Lower probability $\underline{P}$ completely characterizes the set
\begin{align*}
   \text{core}(\underline{P})&:=\{P \in \Delta(\Omega,\mathcal{F}): P(A) \geq \underline{P}(A), \forall A \in \mathcal{F}\}\\ &=\{P \in \Delta(\Omega,\mathcal{F}): \overline{P}(A) \geq P(A) \geq \underline{P}(A), \forall A \in \mathcal{F}\},
\end{align*}
where the second equality is a characterization \cite[Page 3389]{cerreia}. It is the set of all probability measures on $(\Omega,\mathcal{F})$ that setwise dominate $\underline{P}$. Notice that the core is convex \cite[Section 2.2]{marinacci2} and weak$^\star$-compact \cite[Proposition 3]{marinacci2}.\footnote{Recall that in the weak$^\star$ topology, a net $(P_\alpha)_{\alpha \in I}$ converges to $P$ if and only if $P_\alpha(A) \rightarrow P(A)$, for all $A \in \mathcal{F}$.} 

By complete characterization, we mean that it is sufficient to know $\underline{P}$ to be able to completely specify core$(\underline{P})$. To emphasize this aspect, some authors say that $\underline{P}$ is \textit{compatible} with core$(\underline{P})$ \cite{gong}.

To generalize DPK to DIPK, we first prescribe the agent to specify a set of probabilities $\mathcal{P}$, then to compute the lower probability associated with it. The core of such lower probability represents the agent's initial beliefs. To update their beliefs, the agent computes the DPK update of the extrema of the core, that is, of the elements of the core that cannot be written as a convex combination of other elements. Their updated beliefs are represented by the convex hull of the updated extrema, which coincides with the core of the updated lower probability by \cite[Theorem 3.6.2]{walley}.

We require the agent's beliefs to be represented by the core for two main reasons. The first, mathematical, one is to ensure that the belief set can be completely characterized by the lower probability, and that lower probability $\underline{P}$ is coherent \cite[Section 3.3.3]{walley}. The second, philosophical, one is presented in Remark \ref{rem_on_lower_prev_and_sensitivity_approach}.

\begin{remark}\label{rem_on_lower_prev_and_sensitivity_approach}
At the beginning of the study, the \textit{sensitivity analysis} approach to imprecise probabilities prescribes the agent to specify a set of possible (or plausible) candidates for the true or ideal probability measure $P_T$ governing the events of interest \cite{berger}. As \cite[Section 5.9]{walley} points out, this way of proceeding assumes the \textit{axiom of ideal precision}: there exists a true probability measure $P_T$ governing the random events, but it cannot be precisely known e.g. because we would need an infinitely long reflection to elicit it. 

The philosophical motivation for the agent's beliefs being represented by the core of $\underline{P}$ is the following. A criticism brought forward by Walley in  \cite[Section 2.10.4.(c)]{walley} is that, given a lower probability $\underline{P}$, there is no cogent reason for which the agent should choose a specific $P_T$ that dominates $\underline{P}$, or -- for that matter -- a collection of ``plausible'' probabilities. Because the core considers all (regular) probability measures that dominate $\underline{P}$, it is the perfect instrument to reconcile Walley's behavioral and sensitivity analysis interpretations.\footnote{In the imprecise probabilities literature, agents are often required to specify coherent lower (and upper) probabilities \cite[Section 2.5]{walley}. 
    In \cite[Section 3.3.3]{walley} the author shows that $\underline{P}$ is coherent if and only if it can be written as the infimum of a set $\mathcal{P}$ of regular probability measures.}

It is worth to notice that lower probabilities are a special case of \textit{lower previsions} \cite{troffaes,walley}. To define these latter, we need to first introduce the concept of \textit{gambles}. A gamble $Y$ is a bounded real-valued function on $\Omega$ which is interpreted as an uncertain reward. The set of all gambles on $\Omega$ is denoted by $\mathcal{L}(\Omega)$. Call now $\mathcal{K}$ an arbitrary subset of $\mathcal{L}(\Omega)$; a lower prevision $\underline{P}$ is a real-valued function defined on $\mathcal{K}$ such that, for all $K\in\mathcal{K}$, $\underline{P}(K)$ is the supremum price $\mu$ for which it is asserted that the gamble $X-\mu$ is desirable to the agent \cite[Section 2.3.1]{walley}. If we have a set $\mathcal{P}$ of probability measures, then $\underline{P}(X)=\inf_{P\in\mathcal{P}} \mathbb{E}_P(X)$. Consider now a generic event $A \in \mathcal{F}$, and call $\mathcal{A}:=\{\mathbb{I}_A(\omega):\omega\in\Omega \text{, } A\in\mathcal{F}\}$ the collection of indicator functions of events $A\in\mathcal{F}$. We can see how an indicator function is just a $0-1$ valued gamble, and so lower probabilities can be seen as lower previsions defined on $\mathcal{A}\subset\mathcal{L}(\Omega)$ \cite[Section 2.7.2]{walley}. In this work we focus on lower probabilities because they are more immediately related to regular (additive) probabilities, and because they are easier to derive from a set of probability measures. In the future, we will generalize DIPK to deal with lower previsions. 
\end{remark}

\subsection{DIPK for sets of probabilities}
The analysis begins with specifying a set $\mathcal{P} \subset \Delta(\Omega,\mathcal{F})$ of probability measures on $\Omega$. 
We then consider $\underline{P}\equiv \underline{P}_{\mathcal{E}_0}$, the lower probability associated with $\mathcal{P}$. The set representing the agent's initial beliefs is given by $\mathcal{P}^{\text{co}}_{\mathcal{E}_0}=\text{core}(\underline{P}_{\mathcal{E}_0})$, where superscript co denotes the fact that $\mathcal{P}^{\text{co}}_{\mathcal{E}_0}$ is convex and compact. The importance of these properties is explained in Remark \ref{convexity}. We also need to consider the set $\mathcal{P}_{\mathcal{E}_0}=\text{ex}\mathcal{P}^{\text{co}}_{\mathcal{E}_0}$ of extrema of $\mathcal{P}^{\text{co}}_{\mathcal{E}_0}$. Of course,  $\mathcal{P}^{\text{co}}_{\mathcal{E}_0}=\text{Conv}(\mathcal{P}_{\mathcal{E}_0})$, where $\text{Conv}(\cdot)$ denotes the convex hull.

We then compute the DPK update of every element in $\mathcal{P}_{\mathcal{E}_0}$, and we obtain $$\mathcal{P}_{\mathcal{E}_1}:=\left\lbrace{P_{\mathcal{E}_1} \in \Delta(\Omega,\mathcal{F}) : P_{\mathcal{E}_1}(A)=\sum_{E_j \in \mathcal{E}_1} P_{\mathcal{E}_0}(A \mid E_j)P_{\mathcal{E}_1}(E_j) \text{, } \forall A \in \mathcal{F}, P_{\mathcal{E}_0} \in \mathcal{P}_{\mathcal{E}_0} }\right\rbrace.$$
After that, we compute $\mathcal{P}^\text{co}_{\mathcal{E}_1}=\text{Conv}(\mathcal{P}_{\mathcal{E}_1})=\text{core}(\underline{P}_{\mathcal{E}_1})$, where $\underline{P}_{\mathcal{E}_1}$ is the updated lower probability, and the last equality holds by \cite[Theorem 3.6.2]{walley}. 

Repeating this procedure, we build two sequences, $(\mathcal{P}_{\mathcal{E}_t})$ and $(\mathcal{P}^{\text{co}}_{\mathcal{E}_t})$. Notice that for any $t \in \mathbb{N}$, the lower and upper probabilities associated with $\mathcal{P}_{\mathcal{E}_t}$ are equal to the lower and upper probabilities associated with $\mathcal{P}^\text{co}_{\mathcal{E}_t}$, respectively. An example of how to update subjective beliefs according to DIPK is given in section \ref{soccer}.

Recall that $d_{TV}$ denotes the total variation distance
$$d_{TV}(\pi,\gamma):=\sup_{A\in\mathcal{F}} \left| \pi(A)-\gamma(A) \right|,$$
for all $\pi,\gamma\in\Delta(\Omega,\mathcal{F})$. Suppose 
$\lim_{n_t\rightarrow\infty}\beta(n_t)=0$, $[1-\beta(n_t)]/n_t=O(1/n_t)$, and $\mathbb{E}(X)<\infty$. Call 
$$\mathcal{P}_{\tilde{\mathcal{E}}}:=\left\lbrace{P_{\tilde{\mathcal{E}}} \in \Delta(\Omega,\mathcal{F}) : d_{TV}(P_{\mathcal{E}_t}, P_{\tilde{\mathcal{E}}})\xrightarrow[n_t\rightarrow\infty]{a.s.}0 \text{, } P_{\mathcal{E}_t} \in \mathcal{P}_{\mathcal{E}_t} }\right\rbrace.$$
That is, $\mathcal{P}_{\tilde{\mathcal{E}}}$ is the set of limits (as $n_t$ goes to infinity with probability $1$ in the total variation metric) of the elements $P_{\mathcal{E}_t}$ of set $\mathcal{P}_{\mathcal{E}_t}$ representing the (extrema of the) agent's updated beliefs. We are sure $\mathcal{P}_{\tilde{\mathcal{E}}}$ is not empty by Proposition \ref{limit} and Theorem \ref{prop3}. Then, by construction, we have that 
\begin{equation}\label{hausd}
 d_H(\mathcal{P}_{\mathcal{E}_t},\mathcal{P}_{\tilde{\mathcal{E}}})=\max \left( \sup_{P \in \mathcal{P}_{\mathcal{E}_t}} d_{TV}(P,\mathcal{P}_{\tilde{\mathcal{E}}}), \sup_{P^\prime \in \mathcal{P}_{\tilde{\mathcal{E}}}} d_{TV}(\mathcal{P}_{\mathcal{E}_t},P^\prime) \right) \rightarrow 0   
\end{equation}
as $n_t$ goes to infinity with probability $1$, where $d_H$ denotes the Hausdorff metric, and, in general, $d_{TV}(\pi,\Gamma):=\inf_{\gamma \in \Gamma} d_{TV}(\pi,\gamma)$, for all $\pi \in \Delta(\Omega,\mathcal{F})$ and all $\Gamma \subset \Delta(\Omega,\mathcal{F})$. Such a convergence is true also for $\mathcal{P}^{\text{co}}_{\mathcal{E}_t}$ and $\mathcal{P}^{\text{co}}_{\tilde{\mathcal{E}}}$, as shown in the next proposition. 
\begin{proposition} \label{cvg_cl_co}
If $\lim_{n_t\rightarrow\infty}\beta(n_t)=0$, $[1-\beta(n_t)]/n_t=O(1/n_t)$, and $\mathbb{E}(X)<\infty$, then the following is true with probability $1$
$$d_H(\mathcal{P}^{\text{co}}_{\mathcal{E}_t},\mathcal{P}^{\text{co}}_{\tilde{\mathcal{E}}})\rightarrow 0$$
as $n_t$ go to infinity.
\end{proposition}

\begin{remark}\label{convexity}
Let us discuss the importance of $\mathcal{P}_{\mathcal{E}_t}^{\text{co}}$ being convex and compact. Consider a generic set of probabilities $\Pi$ on a measurable space $(\Omega,\mathcal{F})$. Suppose $\Pi$ is finite, i.e. $\Pi=\{\pi_j\}_{j=1}^k$, for some $k \in \mathbb{N}$. Then, the lower probability associated with $\Pi$ is equivalent to the one associated with its convex hull $\text{Conv}(\Pi)$. If instead $\Pi$ is convex but open, then the lower probability associated with $\Pi$ is equivalent to the one associated with its closure $\text{Cl}(\Pi)$. To this extent, lower probabilities are not able to detect ``holes and dents'' in their associated set of probabilities. This is why we need the sequence of convex and (weak$^\star$-)compact sets $(\mathcal{P}_{\mathcal{E}_t}^{\text{co}})$ to represent the agent's belief updating procedure.

We (tacitly) assumed that the extrema of $\mathcal{P}^\text{co}_{\mathcal{E}_0}$ are finite; we did so for the following reasons. 
 \begin{itemize}
     \item If $\Omega$ is finite, we can see probability measures as vectors in the unit simplex $\mathscr{S}^{d-1}$ of $\mathbb{R}^d$, where $d=\#\Omega<\infty$. So the core of a lower probability will be a (closed) convex subset of $\mathscr{S}^{d-1}$, which can be approximated arbitrarily well by a polytope having finitely many vertices \cite{bronstein}.\footnote{Here ``approximated arbitrarily well'' means that some distance between the convex set and the polytope, e.g. the Hausdorff metric, can be made arbitrarily small.} The polytope with finitely many vertices is the geometric representation of a closed and convex set of probabilities having finitely many extrema.
     \item If $\Omega$ is countable, the assumption is stronger, and has mainly a computational motivation. It corresponds to the agent specifying a \textit{finitely generated credal set} -- that is, the convex hull of finitely many probability measures -- that is (possibly) a superset of the convex hull of $P_1,\ldots,P_k$, $\text{Conv}(P_1,\ldots,P_k)$. 
 \end{itemize}
  As \cite[Lemma 13]{ergo_dipk} shows, if $\Omega$ is at most countable and we work with lower previsions instead of lower probabilities, the core (appropriately redefined) of $\underline{P}$ and $\text{Conv}(P_1,\ldots,P_k)$ coincide, so the assumption that they are equal is automatically verified. This will prove useful when we will generalize DIPK to deal with lower previsions.

\end{remark}

\begin{remark}\label{why_all_this_complexity}
A natural question the reader may ask is why do we need the core to represent the agent beliefs. Indeed, it would be easier to require the agent to specify a finite set of plausible probability measures, and then let the convex hull of such finite set represent their initial beliefs.\footnote{Notice that the convex hull is both convex and weak$^\star$-compact. Compactness comes from it being the convex hull of a finite set in a Banach space (the normed vector space induced by $(\Delta(\Omega,\mathcal{F}),d_{TV})$ is complete because $d_{TV}$ is a complete metric; notice also that $\|\cdot\|_{TV}$-compact implies weak$^\star$-compact by the definition of weak$^\star$-compactness).} The answer is because the lower probability completely characterizes the core, but does not completely characterize the convex hull. In general the convex hull of a finite set of probabilities is a \textit{proper subset} of the core of the lower probability associated with that set, \cite[Example 1]{amarante2} and \cite[Examples 6,7,8]{amarante}. This means that when studying the DIPK update from $\mathcal{P}^{\text{co}}_{\mathcal{E}_t}$ to  $\mathcal{P}^{\text{co}}_{\mathcal{E}_{t+1}}$  we can just update the lower probability $\underline{P}_{\mathcal{E}_t}$ to $\underline{P}_{\mathcal{E}_{t+1}}$ to be able to specify the whole $\mathcal{P}^{\text{co}}_{\mathcal{E}_{t+1}}$. This would not be the case had we not represented the agent's beliefs via the core.

\end{remark}

\begin{remark}\label{near_ign}
    Notice that equation \eqref{tacit_assumption} implies a \textit{near-ignorance} assumption in the DIPK update. This means that every element in $\mathcal{P}_{\mathcal{E}_0}=\text{ex}\mathcal{P}^{\text{co}}_{\mathcal{E}_0}$
    gives positive probability to all nonempty $A \in\mathcal{F}$. This is desirable because no finite sample is enough to annihilate a sufficiently extreme prior belief. To see this, suppose that there is a $P\in\mathcal{P}^{\text{co}}_{\mathcal{E}_0}$ and an $A^\prime\in \mathcal{F}$ such that $P(A^\prime)=0$; then 
\begin{itemize}
    \item $\underline{P}(A^\prime)=0$, and
    \item $P_\mathcal{E}(A^\prime)=0$ as well, since $P(A^\prime\cap E) \leq P(A^\prime)$, for all $E\in\mathcal{E}$, by the monotonicity of probability measures. This implies that $\underline{P}_\mathcal{E}(A^\prime)=0$.
\end{itemize}
As we can see, no finite amount of data can resolve vacuous initial beliefs.
\end{remark}

\section{Procedures to obtain and bound upper and lower probabilities}\label{proc}
As we have seen in Remark \ref{why_all_this_complexity}, the lower probability associated with $\mathcal{P}^{\text{co}}_{\mathcal{E}_t}$ encodes all the information contained in the set. It is natural, then, that we focus our attention on $\underline{P}_{\mathcal{E}_t}$. In this Section, given a generic $t\in\mathbb{N}$, we derive bounds for $\underline{P}_{\mathcal{E}_{t+1}}$ that can be computed without performing the DIPK updated of $\mathcal{P}^{\text{co}}_{\mathcal{E}_t}$. They are interesting in their own right, and will be put to use in Section \ref{behavior} to study the behavior of set $\mathcal{P}^{\text{co}}_{\mathcal{E}_t}$ with respect to set $\mathcal{P}^{\text{co}}_{\mathcal{E}_{t-1}}$.

For any $A \in \mathcal{F}$, and any element $E$ of a generic partition $\mathbf{\mathcal{E}}$, define 
\begin{equation*}
\underline{P}^B_{\mathcal{E}_t}(A \mid E):= \inf_{P \in \mathcal{P}^{\text{co}}_{\mathcal{E}_t}}P(A \mid E) = \inf_{P \in \mathcal{P}^{\text{co}}_{\mathcal{E}_t}} \frac{P(A \cap E)}{P(E)}
\end{equation*}
and 
\begin{equation*}
\overline{P}^B_{\mathcal{E}_t}(A \mid E):= \sup_{P \in \mathcal{P}^{\text{co}}_{\mathcal{E}_t}}P(A \mid E) = \sup_{P \in \mathcal{P}^{\text{co}}_{\mathcal{E}_t}} \frac{P(A \cap E)}{P(E)}.
\end{equation*}
These are called the generalized Bayes' conditional lower and upper probabilities \cite{wasserman}, respectively.
We have the following. 
\begin{proposition}\label{ulb}
For any $A \in \mathcal{F}$ and any $t \in \mathbb{N}$,
\begin{equation}\label{up_bd}
\underline{P}_{\mathcal{E}_{t+1}}(A) \geq \sum_{E_j \in \mathcal{E}_{t+1}}  \underline{P}^B_{\mathcal{E}_t}(A \mid E_j) \left[ \beta(n_t)\underline{P}_{\mathcal{E}_t}(E_j) + (1-\beta(n_t))P^{emp}_{t+1}(E_j) \right]
\end{equation}
and 
\begin{equation}\label{low_bd}
\overline{P}_{\mathcal{E}_{t+1}}(A) \leq \sum_{E_j \in \mathcal{E}_{t+1}}  \overline{P}^B_{\mathcal{E}_t}(A \mid E_j) \left[ \beta(n_t)\overline{P}_{\mathcal{E}_t}(E_j) + (1-\beta(n_t))P^{emp}_{t+1}(E_j) \right].
\end{equation}
\end{proposition}

\begin{corollary}\label{cor_imp}
For all $t \in \mathbb{N}$, all $P_{\mathcal{E}_{t+1}} \in \mathcal{P}^{\text{co}}_{\mathcal{E}_{t+1}}$, and all $A \in\mathcal{F}$, 
\begin{align*}
    P_{\mathcal{E}_{t+1}}(A) \in \bigg[ &\sum_{E_j \in \mathcal{E}_{t+1}}  \underline{P}^B_{\mathcal{E}_t}(A \mid E_j) \left[ \beta(n_t)\underline{P}_{\mathcal{E}_t}(E_j) + (1-\beta(n_t))P^{emp}_{t+1}(E_j) \right] ,\\ &\sum_{E_j \in \mathcal{E}_{t+1}}  \overline{P}^B_{\mathcal{E}_t}(A \mid E_j) \left[ \beta(n_t)\overline{P}_{\mathcal{E}_t}(E_j) + (1-\beta(n_t))P^{emp}_{t+1}(E_j) \right] \bigg].
\end{align*}
\end{corollary}


There are two other ways to define lower and upper conditional probabilities. The first one, called geometric update, is such that for any $A \in \mathcal{F}$, and any element $E$ of a generic partition $\mathbf{\mathcal{E}}$,
$$\underline{P}^G_{\mathcal{E}_t}(A \mid E):=  \frac{\inf_{P \in \mathcal{P}^\text{co}_{\mathcal{E}_t}} P(A \cap E)}{\inf_{P \in \mathcal{P}^\text{co}_{\mathcal{E}_t}} P(E)} = \frac{\underline{P}_{\mathcal{E}_t}(A \cap E)}{\underline{P}_{\mathcal{E}_t}(E)} \quad \text{and} \quad \overline{P}^G_{\mathcal{E}_t}(A \mid E)=\frac{\overline{P}_{\mathcal{E}_t}(A \cap E)}{\overline{P}_{\mathcal{E}_t}(E)}.$$
The other one, called Dempster's rule of conditioning, is the natural dual to the geometric procedure. It differs from this latter from the operational point of view \cite[Section 2]{gong}, but since mathematically they are the same, we are not going to cover Dempster's rule in the present work.

An interpretation of how generalized Bayes' and geometric rules come about when a generic partition $\{E_j\}$ of $\Omega$ is available is the following. Let $\sqcup$ denote the union of disjoint sets, and $\underline{P}$ a generic lower probability. We know that lower probabilities are superadditive, so since given any $A \in \mathcal{F}$ we have that $A=\sqcup_j (A \cap E_j)$, it follows that 
\begin{equation}\label{interpr}
\underline{P}(A) \geq \sum_j \underline{P}(A \cap E_j).
\end{equation}
Now, $\underline{P}(A \cap E_j)$ can be interpreted as the lowest possible probability attached to event $A \cap E_j$, in which case we retrieve generalized Bayes' rule. It can also be rewritten as $\frac{\underline{P}(A \cap E_j)}{\underline{P}(E_j)} \underline{P}(E_j)$; in this latter case, we retrieve the geometric rule. It is worth noting that, for any lower probability $\underline{P}$, by \cite[Lemma 5.3]{gong} we have that
\begin{equation}\label{relation}
\underline{P}^B(A \mid B) \leq \underline{P}^G(A \mid B) \leq \overline{P}^G(A \mid B) \leq \overline{P}^B(A \mid B),
\end{equation}
for all $A,B \in \mathcal{F}$.
\subsection{Geometric rule}\label{lunch}
As we have seen in Proposition \ref{ulb}, generalized Bayes comes naturally from our updating procedure. This because, as argued in Section \ref{why}, Jeffrey's rule is a generalization of Bayesian conditioning. Given the inequalities in \eqref{relation}, we can sharpen the bounds we found using generalized Bayes' rule by using the geometric rule. 


\begin{proposition}\label{free:lunch}
For any $A \in \mathcal{F}$ and any $t \in \mathbb{N}$,
\begin{equation}\label{geom-lb}
\underline{P}_{\mathcal{E}_{t+1}}(A) \geq \sum_{E_j\in\mathcal{E}_{t+1}  }  \underline{P}^G_{\mathcal{E}_t}(A \mid E_j) \left[ \beta(n_t)\underline{P}_{\mathcal{E}_t}(E_j) + (1-\beta(n_t))P^{emp}_{t+1}(E_j) \right]
\end{equation}
and 
\begin{equation}\label{geom-ub}
\overline{P}_{\mathcal{E}_{t+1}}(A) \leq \sum_{E_j\in\mathcal{E}_{t+1}  }  \overline{P}^G_{\mathcal{E}_t}(A \mid E_j) \left[ \beta(n_t)\overline{P}_{\mathcal{E}_t}(E_j) + (1-\beta(n_t))P^{emp}_{t+1}(E_j) \right].
\end{equation}
\end{proposition}

\begin{corollary}\label{cor_imp2}
For all $t \in \mathbb{N}$, all $P_{\mathcal{E}_{t+1}} \in \mathcal{P}^{\text{co}}_{\mathcal{E}_{t+1}}$, and all $A \in\mathcal{F}$, 
\begin{align}
    P_{\mathcal{E}_{t+1}}(A) \in \bigg[ &\sum_{E_j\in\mathcal{E}_{t+1}  }  \underline{P}^G_{\mathcal{E}_t}(A \mid E_j) \left[ \beta(n_t)\underline{P}_{\mathcal{E}_t}(E_j) + (1-\beta(n_t))P^{emp}_{t+1}(E_j) \right], \nonumber\\ &\sum_{E_j\in\mathcal{E}_{t+1}  }  \overline{P}^G_{\mathcal{E}_t}(A \mid E_j) \left[ \beta(n_t)\overline{P}_{\mathcal{E}_t}(E_j) + (1-\beta(n_t))P^{emp}_{t+1}(E_j) \right] \bigg].\label{interval_imp}
\end{align}
In addition, 
\begin{align}
    \sum_{E_j\in\mathcal{E}_{t+1}  } \underline{P}_{\mathcal{E}_t}^G(A \mid E_j) &\left[ \beta(n_t)\underline{P}_{\mathcal{E}_t}(E_j) + (1-\beta(n_t))P^{emp}_{t+1}(E_j) \right] \nonumber\\ &\geq \sum_{E_j\in\mathcal{E}_{t+1}} \underline{P}_{\mathcal{E}_t}^B(A \mid E_j) \left[ \beta(n_t)\underline{P}_{\mathcal{E}_t}(E_j) + (1-\beta(n_t))P^{emp}_{t+1}(E_j) \right] \label{tight_low}
\end{align}
and
\begin{align}
    \sum_{E_j\in\mathcal{E}_{t+1}  } \overline{P}_{\mathcal{E}_t}^G(A \mid E_j) &\left[ \beta(n_t)\overline{P}_{\mathcal{E}_t}(E_j) + (1-\beta(n_t))P^{emp}_{t+1}(E_j) \right] \nonumber\\ &\leq \sum_{E_j\in\mathcal{E}_{t+1}} \overline{P}_{\mathcal{E}_t}^B(A \mid E_j) \left[ \beta(n_t)\overline{P}_{\mathcal{E}_t}(E_j) + (1-\beta(n_t))P^{emp}_{t+1}(E_j) \right]. \label{tight_up}
\end{align}
\end{corollary}

Corollary \ref{cor_imp2} implies that 

\begin{align*}
\bigg[ &\sum_{E_j \in \mathcal{E}_{t+1}}  \underline{P}^G_{\mathcal{E}_t}(A \mid E_j) \left[ \beta(n_t)\underline{P}_{\mathcal{E}_t}(E_j) + (1-\beta(n_t))P^{emp}_{t+1}(E_j) \right] ,\\  &\sum_{E_j \in \mathcal{E}_{t+1}}  \overline{P}^G_{\mathcal{E}_t}(A \mid E_j) \left[ \beta(n_t)\underline{P}_{\mathcal{E}_t}(E_j) + (1-\beta(n_t))P^{emp}_{t+1}(E_j) \right] \bigg] \\ 
\subset \bigg[ &\sum_{E_j \in \mathcal{E}_{t+1}}  \underline{P}^B_{\mathcal{E}_t}(A \mid E_j) \left[ \beta(n_t)\underline{P}_{\mathcal{E}_t}(E_j) + (1-\beta(n_t))P^{emp}_{t+1}(E_j) \right] ,\\
&\sum_{E_j \in \mathcal{E}_{t+1}}  \overline{P}^B_{\mathcal{E}_t}(A \mid E_j) \left[ \beta(n_t)\underline{P}_{\mathcal{E}_t}(E_j) + (1-\beta(n_t))P^{emp}_{t+1}(E_j) \right] \bigg],
\end{align*}
so we retrieve tighter bounds for $\underline{P}_{\mathcal{E}_{t+1}}(A)$ and $\overline{P}_{\mathcal{E}_{t+1}}(A)$, and also obtain a tighter interval around ${P}_{\mathcal{E}_{t+1}}(A)$, for all $A \in \mathcal{F}$ and all $t \in \mathbb{N}$.


\section{Behavior of updated sets of probabilities}\label{behavior}
In the imprecise probabilities literature, three concepts are crucial regarding the behavior of updated sets of probabilities. They are contraction, dilation, and sure loss. In this Section, building on the definitions in \cite[Section 3]{gong}, we introduce the concepts of DIPK-contraction, DIPK-dilation and DIPK-sure loss, and we give sufficient conditions for them to take place. 

Fix some $t \in \mathbb{N}$. We say that $\mathcal{P}^{\text{co}}_{\mathcal{E}_t}$ \textit{DIPK-contracts with respect to $\mathcal{P}^{\text{co}}_{\mathcal{E}_{t-1}}$ for some $A \in  \mathcal{F}$} if $\underline{P}_{\mathcal{E}_t}(A) \geq \underline{P}_{\mathcal{E}_{t-1}}(A)$ and $\overline{P}_{\mathcal{E}_t}(A) \leq \overline{P}_{\mathcal{E}_{t-1}}(A)$, and at least one inequality is strict. It \textit{strictly DIPK-contracts} if both the inequalities are strict. In addition, we say that sequence $(\mathcal{P}^{\text{co}}_{\mathcal{E}_t})$ \textit{DIPK-contracts for some $A \in  \mathcal{F}$} if for any $t \in \mathbb{N}$, we have that $\underline{P}_{\mathcal{E}_t}(A) \geq \underline{P}_{\mathcal{E}_{t-1}}(A)$, $\overline{P}_{\mathcal{E}_t}(A) \leq \overline{P}_{\mathcal{E}_{t-1}}(A)$, and the inequalities are strict for some $t$.

\textit{DIPK-dilation} is defined analogously, by inverting the inequality signs.

Finally, we say that $\mathcal{P}^{\text{co}}_{\mathcal{E}_t}$ exhibits \textit{DIPK-sure loss with respect to $\mathcal{P}^{\text{co}}_{\mathcal{E}_{t-1}}$ for some $A \in  \mathcal{F}$} if $\underline{P}_{\mathcal{E}_t}(A) > \overline{P}_{\mathcal{E}_{t-1}}(A)$ or $\overline{P}_{\mathcal{E}_t}(A) < \underline{P}_{\mathcal{E}_{t-1}}(A)$.


\begin{proposition}\label{contr}
For any $t \in \mathbb{N}$, 
sufficient conditions for $\mathcal{P}^{\text{co}}_{\mathcal{E}_t}$ to DIPK-contract with respect to $\mathcal{P}^{\text{co}}_{\mathcal{E}_{t-1}}$ for some $A \in \mathcal{F}$ are the following
$$\sum_{E_j \in \mathcal{E}_{t}}  \underline{P}^B_{\mathcal{E}_{t-1}}(A \mid E_j) \left[ \beta(n_t)\underline{P}_{\mathcal{E}_{t-1}}(E_j) + (1-\beta(n_t))P^{emp}_{t}(E_j) \right] \geq \underline{P}_{\mathcal{E}_{t-1}}(A)$$
and
$$\sum_{E_j \in \mathcal{E}_{t}}  \overline{P}^B_{\mathcal{E}_{t-1}}(A \mid E_j) \left[ \beta(n_t)\overline{P}_{\mathcal{E}_{t-1}}(E_j) + (1-\beta(n_t))P^{emp}_{t}(E_j) \right] \leq \overline{P}_{\mathcal{E}_{t-1}}(A),$$
and at least one inequality is strict.
\end{proposition}

Notice that we obtain strict DIPK-contraction if both the inequalities are strict. We have the same results if we use geometric lower conditional probabilities instead of the generalized Bayes' ones. We also have the following.
\begin{proposition}\label{sure:loss}
For any $t \in \mathbb{N}$, sufficient conditions for $\mathcal{P}^{\text{co}}_{\mathcal{E}_t}$ to exhibit DIPK-sure loss with respect to $\mathcal{P}^{\text{co}}_{\mathcal{E}_{t-1}}$ for some $A \in \mathcal{F}$ are the following
$$\sum_{E_j \in \mathcal{E}_{t}}  \underline{P}^B_{\mathcal{E}_{t-1}}(A \mid E_j) \left[ \beta(n_t)\underline{P}_{\mathcal{E}_{t-1}}(E_j) + (1-\beta(n_t))P^{emp}_{t}(E_j) \right] > \overline{P}_{\mathcal{E}_{t-1}}(A)$$
or
$$\sum_{E_j \in \mathcal{E}_{t}}  \overline{P}^B_{\mathcal{E}_{t-1}}(A \mid E_j) \left[ \beta(n_t)\overline{P}_{\mathcal{E}_{t-1}}(E_j) + (1-\beta(n_t))P^{emp}_{t}(E_j) \right] < \underline{P}_{\mathcal{E}_{t-1}}(A).$$
\end{proposition}

Again, we obtain the same conditions if we use geometric lower conditional probabilities instead of the generalized Bayes' ones.


Giving a sufficient condition for DIPK-dilation without directly computing $\underline{P}_{\mathcal{E}_t}(A)$ and $\overline{P}_{\mathcal{E}_t}(A)$ is less straightforward. We have the following.
\begin{proposition}\label{dilation}
For any $t \in \mathbb{N}$ and some $A \in \mathcal{F}$, if there exist $P_{s,\mathcal{E}_{t}}, P_{k,\mathcal{E}_{t}} \in \mathcal{P}^{\text{co}}_{\mathcal{E}_{t}}$ 
such that $\underline{P}_{\mathcal{E}_{t-1}}(A) \geq P_{s,\mathcal{E}_{t}}(A)$ and $\overline{P}_{\mathcal{E}_{t-1}}(A) \leq P_{k,\mathcal{E}_{t}}(A)$, then $\mathcal{P}^{\text{co}}_{\mathcal{E}_t}$ DIPK-dilates with respect to $\mathcal{P}^{\text{co}}_{\mathcal{E}_{t-1}}$ for $A$, and at least one inequality is strict.
\end{proposition}

We obtain strict DIPK-dilation if both the inequalities in Proposition \ref{dilation} are strict. As we can see, we do not need to directly compute $\underline{P}_{\mathcal{E}_t}(A)$ and $\overline{P}_{\mathcal{E}_t}(A)$. We only need to find $P_{s,\mathcal{E}_{t-1}}, P_{k,\mathcal{E}_{t-1}} \in \mathcal{P}^{\text{co}}_{\mathcal{E}_{t-1}}$ such that their updates satisfy the assumptions in Proposition \ref{dilation}.

We can give a result, similar to Proposition \ref{dilation} that provides sufficient conditions for $\mathcal{P}^\text{co}_{\mathcal{E}_t}$ to DIPK-contract with respect to $\mathcal{P}^\text{co}_{\mathcal{E}_{t-1}}$ for some $A \in \mathcal{F}$. This is interesting because, contrary to what we have in Proposition \ref{contr}, we do not use the notions of lower and upper conditional probabilities. Its downside is that it requires the computation of both $\underline{P}_{\mathcal{E}_t}(A)$ and $\overline{P}_{\mathcal{E}_t}(A)$.
\begin{proposition}\label{contr2}
For any $t \in \mathbb{N}$ and some $A \in \mathcal{F}$, if there exist $P_{s,\mathcal{E}_{t-1}}, P_{k,\mathcal{E}_{t-1}} \in \mathcal{P}^{\text{co}}_{\mathcal{E}_{t-1}}$ such that $\underline{P}_{\mathcal{E}_{t}}(A) \geq P_{k,\mathcal{E}_{t-1}}(A)$ and $\overline{P}_{\mathcal{E}_{t}}(A) \leq P_{s,\mathcal{E}_{t-1}}(A)$, and at least one inequality is strict, then $\mathcal{P}^{\text{co}}_{\mathcal{E}_t}$ DIPK-contracts with respect to $\mathcal{P}^{\text{co}}_{\mathcal{E}_{t-1}}$ for $A$.
\end{proposition}

We obtain strict DIPK-contraction if both the inequalities in Proposition \ref{contr2} are strict. Notice that we cannot give a result similar to Propositions \ref{dilation} and \ref{contr2} for DIPK-sure loss because we cannot require any assumption on any $P_{\mathcal{E}_{t-1}} \in \mathcal{P}^\text{co}_{\mathcal{E}_{t-1}}$, $P_{\mathcal{E}_t} \in \mathcal{P}^\text{co}_{\mathcal{E}_t}$ to make them ``sit in between'' $\underline{P}_{\mathcal{E}_t}(A)$ and $\overline{P}_{\mathcal{E}_{t-1}}(A)$, or $\underline{P}_{\mathcal{E}_{t-1}}(A)$ and $\overline{P}_{\mathcal{E}_t}(A)$.

\section{Two simple examples of DPK and DIPK updating}\label{appl}
In this Section, we present two examples on how to update subjective beliefs according to DPK and DIPK procedures.

\subsection{Trials of a new surgical procedure}\label{surgery}
We continue Example \ref{ex_diac_zab}, and show how to frame it within the DPK paradigm. Recall that we wish to form a probabilistic opinion of a new surgical procedure to be performed three times at a new hospital. Upon one colleague's suggestion that another hospital performed this type of procedure with a success rate of $0.8$, we update by considering random variable $X:\Omega \rightarrow \mathcal{X}=\{0,1,2,3\}$ whose distribution is unknown and such that $X(\omega)$ represents the number of $1$'s in $\omega$.\footnote{Since we observe realizations from the same random variable $X$, it does not make sense to talk about exchangeability of $P$ as in Example \ref{ex_diac_zab}.} As we can see, $X^{-1}(3)=\{111\}$, $X^{-1}(2)=\{011,101,110\}$, $X^{-1}(1)=\{001,010,100\}$, $X^{-1}(0)=\{000\}$. The finest partition of $\Omega$ according to DPK, then, is given by $\tilde{\mathcal{E}}=\{E_0,E_1,E_2,E_3,E_4\}$, where $E_j=X^{-1}(j)$, $j\in\{0,1,2,3\}$, and $E_4=\emptyset$. Recall that in DPK data points contribute information not through their sheer number, but rather the way the partition the space and assign relative frequencies. The information that our colleague provided us is equivalent to observing $1000$ data points $x_1,\ldots,x_{1000}$, out of which $512$ are all $3$'s, $384$ are all $2$'s, $96$ are all $1$'s, and $8$ are all $0$'s. This because the relative frequency $Fr$ of the elements of $\mathcal{X}$ is $Fr(\{3\})=512/1000=1\cdot 0.8^3$, $Fr(\{2\})=384/1000=3 \cdot 0.2 \cdot 0.8^2$,  $Fr(\{1\})=96/1000=3 \cdot 0.2^2 \cdot 0.8$, and $Fr(\{0\})=8/1000=1 \cdot 0.2^3$. But why should they be derived in this way? We have that $Fr(\{3\})=1\cdot 0.8^3$ because there is only $1$ way of obtaining three successes, each of which has probability $0.8$ in the procedures conducted at the hospital that our colleague informed us about. Instead, $Fr(\{2\})=3 \cdot 0.2 \cdot 0.8^2$ because there are $3$ ways of obtaining two successes and one failure, where the probability of the latter is $0.2$ according to our colleague. Finally, $Fr(\{1\})=3 \cdot 0.2^2 \cdot 0.8$ because there are $3$ ways of obtaining one successes and two failures, and $Fr(\{0\})=1\cdot 0.2^3$ because there is only $1$ way of obtaining three failures.

Relative frequency $Fr$ implies that $P^{emp}_1(E_0)=0.008$, $P^{emp}_1(E_1)=0.096$, $P^{emp}_1(E_2)=0.384$, $P^{emp}_1(E_3)=0.512$, and $P^{emp}_1(E_4)=0$. This corresponds to collecting the following probabilistic evidence: three failures with probability $0.008$, only one success with probability $0.096$, two successes with probability $0.384$, and three successes with probability $0.512$. We are now ready to compute the DPK update of our initial $P$. Given the composition of the sample space $\mathcal{X}$, we have that
\begin{align*}
    P(\{000\})&=p_0, \quad P(\{001\})=P(\{100\})=P(\{010\})=p_1, \\ P(\{110\})&=P(\{101\})=P(\{011\})=p_2, \quad P(\{111\})=p_3.
\end{align*}
Suppose $\beta(n_t)=1/n_t$; in turn we have 
\begin{align*}
    P_{\mathcal{E}_1}(\{000\})&=\frac{p_0}{P_{\mathcal{E}_0}(E_0)} \left(\frac{p_0}{1000} + \frac{999}{1000} P^{emp}_1(E_0) \right)\\&=1\cdot \left( \frac{p_0}{1000} + \frac{7.992}{1000} \right) =\frac{p_0+7.992}{1000},\\
    P_{\mathcal{E}_1}(\{001\})=P_{\mathcal{E}_1}(\{010\})=P_{\mathcal{E}_1}(\{100\})&=\frac{p_1}{P_{\mathcal{E}_0}(E_1)}\left(\frac{p_1}{1000} + \frac{999}{1000} P^{emp}_1(E_1) \right)\\&=\frac{1}{3}\cdot \left( \frac{p_1}{1000} + \frac{95.904}{1000} \right) =\frac{p_1+95.904}{3000},\\
    P_{\mathcal{E}_1}(\{011\})=P_{\mathcal{E}_1}(\{110\})=P_{\mathcal{E}_1}(\{101\})&=\frac{p_2}{P_{\mathcal{E}_0}(E_2)}\left(\frac{p_2}{1000} + \frac{999}{1000} P^{emp}_1(E_2) \right)\\&=\frac{1}{3}\cdot \left( \frac{p_2}{1000} + \frac{383.616}{1000} \right) =\frac{p_2+383.616}{3000},\\
    P_{\mathcal{E}_1}(\{111\})&=\frac{p_3}{P_{\mathcal{E}_0}(E_3)} \left(\frac{p_3}{1000} + \frac{999}{1000} P^{emp}_1(E_3) \right)\\&=1\cdot \left( \frac{p_3}{1000} + \frac{511.488}{1000} \right) =\frac{p_3+511.488}{1000}.
\end{align*}
We can see how, because of the composition of sample space $\mathcal{X}$, in the case of only one successful outcome the updated probability $P_{\mathcal{E}_1}$ assigned to $\{001\}$, $\{010\}$, and $\{100\}$ is exactly $1/3$ of the mixture between the prior and the empirical probability of $E_1$. The same is true for the case of two successful outcomes.


To generalize the DPK updating presented here to a DIPK updating involving a set $\mathcal{P}$ of probability measures representing the initial beliefs of the agent one can follow the procedure explained in Section \ref{soccer}.

\subsection{Soccer match results}\label{soccer}
This example is built on \cite[Section 4.6.1]{walley}. Let $\Omega=\{W,D,L\}$ represent the result of soccer match Juventus Turin vs Inter Milan, where $W$ denotes a win for Juventus Turin, $D$ a draw, and $L$ a loss for Juventus Turin. Let then $X:\Omega \rightarrow \mathcal{X}=\{0,1\}$, where $1$ denotes a useful result (a victory or a draw) and $0$ denotes a defeat, so $X$ can be thought of as a Bernoulli random variable with unknown parameter. It is immediate to see how the finest partition of $\Omega$ according to DPK is given by $\tilde{\mathcal{E}}=\{E_1,E_2,E_3\}$, where $E_1=\{W,D\}$, $E_2=\{L\}$, and $E_3=\emptyset$. We call $P_{\mathcal{E}_t}$ the $t$-th update of $P\equiv P_{\mathcal{E}_0}$; $P_{\tilde{\mathcal{E}}}$ denotes the limit of sequence $(P_{\mathcal{E}_t})$.\footnote{Notice that $\tilde{\mathcal{E}}$ is attained almost immediately: it is enough to observe $x_j\neq x_k$, for some $j\neq k$.}


The data points $x_1,\ldots,x_n$ that we collect represent the outcomes of past matches. Because the two teams are well established and high-level, it is reasonable to assume that function $X$ is fixed.

Let us describe how to perform a DIPK update of subjective beliefs in this context. Let the agent specify $\mathcal{P}\subset \Delta(\Omega,\mathcal{F})$, and suppose that the lower and upper probabilities $\underline{P}\equiv\underline{P}_{\mathcal{E}_0}$ and $\overline{P}\equiv\overline{P}_{\mathcal{E}_0}$ associated with $\mathcal{P}$ are such that $\underline{P}\left( W \right)=\underline{P}\left( D \right)=0.27$, $\overline{P}\left( W \right)=\overline{P}\left( D \right)=0.52$, $\underline{P}\left( L \right)=0.21$, and  $\overline{P}\left( L \right)=0.31$.\footnote{We write $\underline{P}(\omega)$ in place of $\underline{P}(\{\omega\})$ and $\overline{P}(\omega)$ in place of $\overline{P}(\{\omega\})$, $\omega\in\{W,D,L\}$, for notational convenience.} 

A simplex representation is given in Figure \ref{fig1} where each assessment is represented by a line parallel to one side of the simplex.\footnote{Notice that the higher the values assigned by $P$ to $\{\omega\} \subset \Omega$, the closer the line representing $P(\{\omega\})$ is to vertex $\omega\in\{W,D,L\}$.} The initial beliefs of the agent are encapsulated in $\mathcal{P}^{\text{co}}_{\mathcal{E}_0}=\text{core}(\underline{P})$. To update $\mathcal{P}^{\text{co}}_{\mathcal{E}_0}$ we need to find $\mathcal{P}_{\mathcal{E}_0}=\text{ex}\mathcal{P}^{\text{co}}_{\mathcal{E}_0}$. This is an easy job; it is sufficient to 
\begin{enumerate}
    \item equate $P(\omega)$ to either $\underline{P}(\omega)$ or $\overline{P}(\omega)$ for two of the three events. The probability of the third is then determined;
    \item check which of the resulting $P$ satisfies $\underline{P}\leq P \leq \overline{P}$.
\end{enumerate}
This procedure gives us four extreme points $\mathcal{P}_{\mathcal{E}_0}=\{P^{ex}_{1,\mathcal{E}_0},P^{ex}_{2,\mathcal{E}_0},P^{ex}_{3,\mathcal{E}_0},P^{ex}_{4,\mathcal{E}_0}\}$ such that
\begin{align*}
  (P^{ex}_{1,\mathcal{E}_0}(W),P^{ex}_{1,\mathcal{E}_0}(D),P^{ex}_{1,\mathcal{E}_0}(L))&=(0.52,0.27,0.21),\\
  (P^{ex}_{2,\mathcal{E}_0}(W),P^{ex}_{2,\mathcal{E}_0}(D),P^{ex}_{2,\mathcal{E}_0}(L))&=(0.27,0.42,0.31), \\
  (P^{ex}_{3,\mathcal{E}_0}(W),P^{ex}_{3,\mathcal{E}_0}(D),P^{ex}_{3,\mathcal{E}_0}(L))&=(0.42,0.27,0.31),\\
  (P^{ex}_{4,\mathcal{E}_0}(W),P^{ex}_{4,\mathcal{E}_0}(D),P^{ex}_{4,\mathcal{E}_0}(L))&=(0.27,0.52,0.21).
\end{align*}
The extrema $\mathcal{P}_{\mathcal{E}_0}$ of $\mathcal{P}^{\text{co}}_{\mathcal{E}_0}$ are the vertices of the grey trapezoid in Figure \ref{fig1}.

\begin{figure}[h!]
\centering
\includegraphics[width=.7\textwidth]{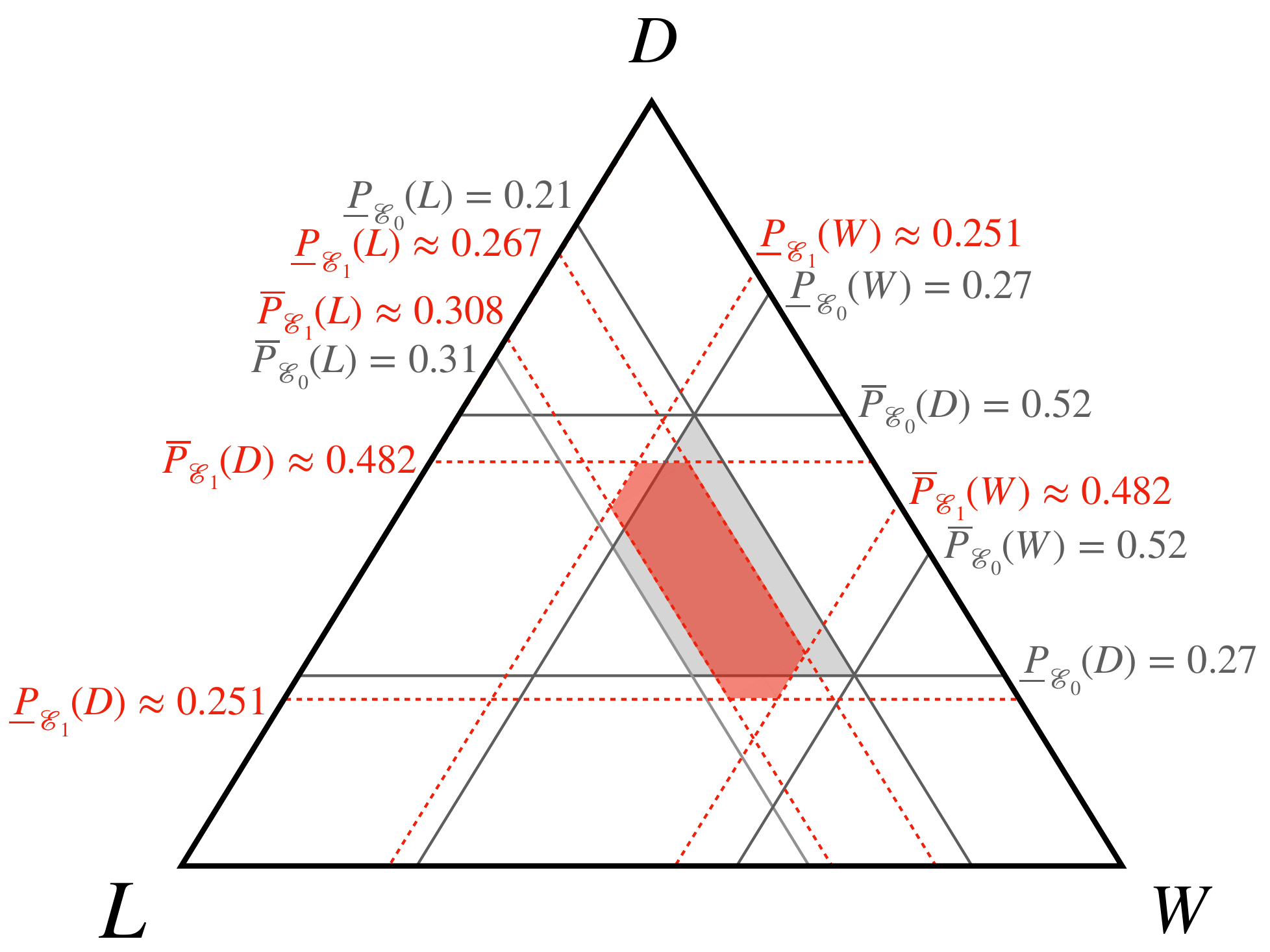}
\caption{Visual representation of $\mathcal{P}^{\text{co}}_{\mathcal{E}_0}$ (the grey trapezoid) and of $\mathcal{P}^{\text{co}}_{\mathcal{E}_1}$ (the red hexagon) in our soccer example. $\underline{P}_{\mathcal{E}_0}$ is represented by the solid grey lines, while $\underline{P}_{\mathcal{E}_1}$ by the dashed red lines.}
\centering
\label{fig1}
\end{figure}

As of January 12, 2022, there have been $257$ matches between the two teams, with $178$ useful results for Juventus Turin and $79$ wins for Inter Milan.\footnote{Data available \href{https://en.wikipedia.org/wiki/Derby_d\%27Italia}{here}.} This is to say that we observe $x_1,\ldots,x_{257}$ such that $178$ are $1$'s, and $79$ are $0$'s. Then, to compute $\mathcal{P}^{\text{co}}_{\mathcal{E}_1}$ it is enough to update the extrema in $\mathcal{P}_{\mathcal{E}_0}$ so to obtain $\mathcal{P}_{\mathcal{E}_1}$, and then consider the convex hull of the latter. The partition induced by the collected data is $\mathcal{E}_1=\{E_1,E_2,E_3\}$, and we have that $P^{emp}_1(E_1)=178/257$, $P^{emp}_1(E_2)=79/257$ and $P^{emp}_1(E_3)=0$. This corresponds to collecting the following probabilistic evidence: Juventus Turin obtains a useful result with probability $178/257$, and it loses with probability $79/257$. Let us update $P^{ex}_{1,\mathcal{E}_0}$ to $P^{ex}_{1,\mathcal{E}_1}$. Suppose $\beta(n_t)=\frac{1}{\log(n_t+1)}$;\footnote{Notice that in this example $[1-\beta(n_t)]/n_t$ is not $O(1/n_t)$, so one of the hypotheses of Proposition \ref{cvg_cl_co} is not met.} we have
\begin{align*}
    P^{ex}_{1,\mathcal{E}_1}(W)&=\frac{P^{ex}_{1,\mathcal{E}_0}(W)}{P^{ex}_{1,\mathcal{E}_0}(E_1)} P^{ex}_{1,\mathcal{E}_1}(E_1)=\frac{0.52}{0.52+0.27}\left( \frac{0.52+0.27}{\log(258)} + \frac{\log(258)-1}{\log(258)}\cdot\frac{178}{257} \right) \approx 0.482,\\
    P^{ex}_{1,\mathcal{E}_1}(D)&=\frac{P^{ex}_{1,\mathcal{E}_0}(D)}{P^{ex}_{1,\mathcal{E}_0}(E_1)} P^{ex}_{1,\mathcal{E}_1}(E_1)=\frac{0.27}{0.52+0.27}\left( \frac{0.52+0.27}{\log(258)} + \frac{\log(258)-1}{\log(258)}\cdot\frac{178}{257} \right) \approx 0.251,\\
    P^{ex}_{1,\mathcal{E}_1}(L)&=\frac{P^{ex}_{1,\mathcal{E}_0}(L)}{P^{ex}_{1,\mathcal{E}_0}(E_2)} P^{ex}_{1,\mathcal{E}_1}(E_2)=1 \cdot \left( \frac{0.21}{\log(258)} + \frac{\log(258)-1}{\log(258)}\cdot\frac{79}{257} \right) \approx 0.267,
\end{align*}
so
$$(P^{ex}_{1,\mathcal{E}_1}(W),P^{ex}_{1,\mathcal{E}_1}(D),P^{ex}_{1,\mathcal{E}_1}(L)) \approx (0.482,0.251,0.267).$$
The other elements of $\mathcal{P}_{\mathcal{E}_0}$ are updated similarly. In particular,
\begin{align*}
  (P^{ex}_{2,\mathcal{E}_1}(W),P^{ex}_{2,\mathcal{E}_1}(D),P^{ex}_{2,\mathcal{E}_1}(L))&\approx(0.271,0.421,0.308), \\
  (P^{ex}_{3,\mathcal{E}_1}(W),P^{ex}_{3,\mathcal{E}_1}(D),P^{ex}_{3,\mathcal{E}_1}(L))&\approx(0.421,0.271,0.308), \\
  (P^{ex}_{4,\mathcal{E}_1}(W),P^{ex}_{4,\mathcal{E}_1}(D),P^{ex}_{4,\mathcal{E}_1}(L))&\approx(0.251,0.482,0.267).
\end{align*}
So we have that $\underline{P}_{\mathcal{E}_1}(W)\approx 0.251\approx \underline{P}_{\mathcal{E}_1}(D)$, $\overline{P}_{\mathcal{E}_1}(W)\approx 0.482\approx \overline{P}_{\mathcal{E}_1}(D)$,  $\underline{P}_{\mathcal{E}_1}(L)\approx 0.267$, and  $\overline{P}_{\mathcal{E}_1}(L) \approx 0.308$. As we can see from Figure \ref{fig1}, the graphical representation of $\mathcal{P}^{\text{co}}_{\mathcal{E}_1}$ is a hexagon (in red). Notice also that, since $0.267 \approx \underline{P}_{\mathcal{E}_1}(L) > \underline{P}_{\mathcal{E}_0}(L)=0.21$ and $0.308 \approx \overline{P}_{\mathcal{E}_1}(L) < \overline{P}_{\mathcal{E}_0}(L)=0.31$, we have that $\mathcal{P}^{\text{co}}_{\mathcal{E}_1}$ exhibits DIPK-contraction with respect to $\mathcal{P}^{\text{co}}_{\mathcal{E}_0}$ for $L$.

\section{Conclusion}\label{concl}
In this paper, we presented dynamic probability kinematics (DPK) and dynamic imprecise probability kinematics (DIPK). These methods dynamically update subjective beliefs stated in terms of precise and imprecise probabilities, in the presence of partial information (both DPK and DIPK) and of ambiguity (DIPK only). In the case of DIPK, we provided bounds for the upper and lower probabilities associated with the updated sets, and studied their set-specific behavior including contraction, dilation, and sure loss. Two examples were provided to illustrate the procedures.

This work is just the first step towards a fully developed DIPK theory. In the future, we plan to relax the assumption that $\Omega$ needs to be at most countable. We also plan to find sufficient conditions for the inequalities in Section \ref{proc} to hold with equality. For example, in \cite{wasserman}, the authors study a Bayes' theorem for lower probabilities. They first find a lower bound for the lower posterior $\underline{P}_y$ coming from a generalization of Bayes' rule combining lower prior $\underline{P}$ with likelihood $f(y\mid \theta)$. They then show that if lower prior $\underline{P}$ is convex, that is, if $\underline{P}(A\cup B)+\underline{P}(A\cap B) \geq \underline{P}(A)+\underline{P}(B)$, then the lower bound for lower posterior $\underline{P}_y$ holds with equality. We conjecture that convexity, possibly together with additional requirements, will allow us to reach our goal.

Furthermore, we aim to generalize DIPK by allowing the agent to gather inconsistent evidence as in \cite{marchetti}. We also intend to let partial information be modeled via a set of probability distributions on $\mathcal{X}$, as empirical probabilities usually need a very large number of observations to estimate probabilities which are very close to zero or one to a good standard of relative accuracy. After that, we plan to propose a way of performing statistical analysis based on DIPK updating. Our last goal is to generalize DIPK to work with lower previsions in place of lower probabilities.

\section*{Acknowledgements}
The authors thank Sayan Murkherjee for inspiring this project and helpful discussions, Teddy Seidenfeld for a fruitful discussion about the commutativity of dynamic probability kinematics, Xiao-Li Meng for an insightful dialogue on information theory, and Alessandro Zito for his help with Figure \ref{fig1}. Research of Ruobin Gong is supported in part by the National Science Foundation (DMS-1916002). Michele Caprio would like to acknowledge partial funding by the National Science Foundation (CCF-1934964) and the Army Research Office (ARO MURI W911NF2010080).

\appendix
\section{Proofs}\label{proofs}
\begin{proof}[Proof of Proposition \ref{claim1}]
We begin by showing that $P_\mathcal{E}$ is a probability measure. We verify the Kolmogorovian axioms for a probability measure. First, we have that $P_{\mathcal{E}}(A) \geq 0$, for all $A \in \mathcal{F}$. This comes by its definition, since it is defined as the summation of products of nonnegative quantities. Second, we have that $P_{\mathcal{E}}(\Omega)=1$. This comes from the following
$$P_{\mathcal{E}}(\Omega)=\sum_{E_j \in \mathcal{E}} P(\Omega \mid E_j) P_{\mathcal{E}}(E_j)=\sum_{E_j \in \mathcal{E}}P_{\mathcal{E}}(E_j)=1.$$ 
Finally, we have that if $\{A_i\}_{i \in I}$ is a countable, pairwise disjoint collection of events, then $P_{\mathcal{E}}(\cup_{i \in I} A_i)=\sum_{i \in I} P_{\mathcal{E}}(A_i)$. This because
\begin{align*}
P_{\mathcal{E}} \left( \cup_{i \in I} A_i \right) &= \sum_{E_j \in \mathcal{E}} P \left( \cup_{i \in I} A_i \mid E_j \right) P_{\mathcal{E}}(E_j)\\
&= \sum_{E_j \in \mathcal{E}} \frac{P \left(\left[ \cup_{i \in I} A_i \right] \cap E_j\right)}{P(E_j)} P_{\mathcal{E}}(E_j)\\
&= \sum_{E_j \in \mathcal{E}} \frac{P \left( \cup_{i \in I} \left[A_i  \cap E_j\right] \right)}{P(E_j)} P_{\mathcal{E}}(E_j)\\
&= \sum_{E_j \in \mathcal{E}} \frac{ \sum_{i \in I} P \left(A_i  \cap E_j\right)}{P(E_j)} P_{\mathcal{E}}(E_j)\\
&= \sum_{i \in I} \sum_{E_j \in \mathcal{E}} \frac{ P \left(A_i  \cap E_j\right)}{P(E_j)} P_{\mathcal{E}}(E_j) = \sum_{i \in I} P_{\mathcal{E}} \left( A_i \right).
\end{align*}

We now show that $P_\mathcal{E}$ is a Jeffrey's posterior for $P$. We use \cite[Theorem 2.1]{diaconis_zabell}: it states that $P^\star$ is a Jeffrey's posterior for $P$ if and only if there exists a constant $B \geq 1$ such that $P^\star(\{\omega\}) \leq BP(\{\omega\})$, for all $\omega \in \Omega$. Fix any $\omega \in \Omega$. We have that $P_\mathcal{E}(\{\omega\}) = \sum_{E_j \in \mathcal{E}} P(\{\omega\} \mid E_j) P_\mathcal{E}(E_j)$. Call ${E}_\omega$ the element in $\mathcal{E}$ such that $\{\omega\} \cap {E}_\omega \neq \emptyset$. Then, we have that 
\begin{align}\label{eq_om}
P_\mathcal{E}(\{\omega\})= \sum_{E_j \in \mathcal{E}} P(\{\omega\} \mid E_j) P_\mathcal{E}(E_j) = \frac{P_\mathcal{E}({E}_\omega)}{P({E}_\omega)}P(\{\omega\}).
\end{align}
Now, let $B_\omega:= \lceil \frac{P_\mathcal{E}({E}_\omega)}{P({E}_\omega)} +1 \rceil$. We have that $P_\mathcal{E}(\{\omega\}) < B_\omega P(\{\omega\})$. Consider then the well-ordered collection $\{B_\omega\}_{\omega \in \Omega}$. If we let $B:=\sup_{B^\prime_\omega \in \{B_\omega\}} B^\prime_\omega$, we conclude that $P_\mathcal{E}(\{\omega\}) < B P(\{\omega\})$, for all $\omega \in \Omega$.
\end{proof}

\begin{proof}[Proof of Proposition \ref{limit}]
We have two cases. If $\cup_{i \in \mathbb{N}}x_i=\mathcal{X}$, then, since we observed all the elements of $\mathcal{X}$, and given the procedure in Sections \ref{sec:1} and \ref{mech} to refine the partition, it is immediate to see that the partition $\tilde{\mathcal{E}}$ induced by $\{x_i\}_{i\in\mathbb{N}}$ cannot be further refined. If instead $\cup_{i \in \mathbb{N}}x_i=\mathcal{X}_{reduced}\subsetneq\mathcal{X}$, then the elements of partition $\tilde{\mathcal{E}}$ will be the unique elements of the collection $\{X^{-1}(x_i)\}_{x_i\in\mathcal{X}_{reduced}}$, plus an extra one given by $(\cup_{x_i\in\mathcal{X}_{reduced}}X^{-1}(x_i))^c$.
\end{proof}

\begin{proof}[Proof of Theorem \ref{prop3}]
Let $t=1$ and fix any $A\in\mathcal{F}$. Let $\#\mathcal{E}_1=m+1$ and assume without loss of generality that $E_{m+1}=\emptyset$. We have that 
\begin{align*}
    P_{\mathcal{E}_1}(A)&=\sum_{E_j\in\mathcal{E}_1} P(A\mid E_j) P_{\mathcal{E}_1}(E_j)\\
    &=\sum_{E_j\in\mathcal{E}_1} P(A\mid E_j) \left[ \beta(n_1) P(E_j) + \left( 1-\beta(n_1) \right) \frac{1}{n_1} \sum_{i=1}^{n_1} \mathbb{I}(E_j=E_i)\right].
\end{align*}
Let then $n_1\rightarrow\infty$; we have
\begin{align}
    \lim_{n_1\rightarrow\infty} P_{\mathcal{E}_1}(A)&=\lim_{n_1\rightarrow\infty}\sum_{E_j\in\mathcal{E}_1} P(A\mid E_j) \left[ \beta(n_1) P(E_j) + \left( 1-\beta(n_1) \right) \frac{1}{n_1} \sum_{i=1}^{n_1} \mathbb{I}(E_j=E_i)\right] \nonumber\\
    &=\sum_{E_j\in\tilde{\mathcal{E}}} \left\lbrace{P(A\mid E_j)\left[ \lim_{n_1\rightarrow\infty}\beta(n_1) P(E_j) + \lim_{n_1\rightarrow\infty} \frac{1-\beta(n_1)}{n_1} \sum_{i=1}^{n_1} \mathbb{I}(E_j=E_i) \right]}\right\rbrace \nonumber \\
    &= \sum_{E_j\in\tilde{\mathcal{E}}} P(A\mid E_j) Q(E_j) \label{equazione_finale}
\end{align}
with $Q$-probability $1$. The equality in \eqref{equazione_finale} comes from our assumptions and the strong law of large numbers. We considered $t=1$ to highlight the dependence of the limiting distribution on the prior $P$. For a generic $t\in\mathbb{N}$, we have that
\begin{equation}\label{general_equation}
    \lim_{n_t\rightarrow\infty} P_{\mathcal{E}_t}(A)=\sum_{E\in\tilde{\mathcal{E}}} P_{\mathcal{E}_{t-1}}(A\mid E) Q(E)
\end{equation}
almost surely, for all $A\in\mathcal{F}$. Notice that $P_{\mathcal{E}_{t-1}}(A\mid E)$ does not depend on $n_t$, and $P_{\mathcal{E}_{t-1}}$ ``contains'' the prior as shown in equation \eqref{how_2_deps_on_prior}. We denote $P_{\tilde{\mathcal{E}}}(A):=\sum_{E\in\tilde{\mathcal{E}}} P_{\mathcal{E}_{t-1}}(A\mid E) Q(E)$, for all $A\in\mathcal{F}$. It is immediate to see that $P_{\tilde{\mathcal{E}}} \in \mathscr{Q}$. Finally, notice that \eqref{general_equation} entails that $\lim_{n_t\rightarrow\infty} d_{TV}(P_{\mathcal{E}_t},P_{\tilde{\mathcal{E}}})=0$ almost surely, concluding the proof.
\end{proof}

\begin{proof}[Proof of Proposition \ref{data_order}]
We first point out that $\tilde{\mathcal{E}}=\tilde{\mathcal{E}}^\prime$. This because, no matter the order in which we collect data points $x_i \in \mathcal{X}$, in the limit we either end up observing all the elements of $\mathcal{X}$, or all the elements of $\mathcal{X}_{reduced}$ in the case $\cup_{i\in\mathbb{N}}x_i = \mathcal{X}_{reduced}\subsetneq \mathcal{X}$. So if $\tilde{\mathcal{E}}$ is finer than $\tilde{\mathcal{E}}^\prime$, this means that there exists an $\omega$ that is mapped by $X$ into two different values, a contradiction. If instead $\tilde{\mathcal{E}}$ is coarser than $\tilde{\mathcal{E}}^\prime$, this means that $\tilde{\mathcal{E}}$ can be further refined, which contradicts Proposition \ref{limit}. Then, the claim follows by the uniqueness of the limit of a sequence.
\end{proof}

\begin{proof}[Proof of Proposition \ref{cvg_cl_co}]
Fix any $t\in\mathbb{N}$, and let $\mathcal{P}_{\mathcal{E}_t}=\{\check{P}_{k,\mathcal{E}_t}\}$. Pick any $P_{\mathcal{E}_t} \in \mathcal{P}^{\text{co}}_{\mathcal{E}_t}$. Then, by the convexity of $\mathcal{P}^{\text{co}}_{\mathcal{E}_t}$, there exists a collection $\{\alpha_k\}\subset \mathbb{R}$ such that $\#\{\alpha_k\}=\#\mathcal{P}_{\mathcal{E}_t}$,  $\sum_k \alpha_k=1$, and $P_{\mathcal{E}_t}(A)=\sum_k \alpha_k \check{P}_{k,\mathcal{E}_t}(A)$, for all $A\in\mathcal{F}$. By construction and Theorem \ref{prop3}, given our assumptions we know that for all $k$, 
$$d_{TV}(\check{P}_{k,\mathcal{E}_t},\check{P}_{k,\tilde{\mathcal{E}}})\rightarrow 0$$
as $n_t$ goes to infinity with probability $1$, where $\mathcal{P}_{\tilde{\mathcal{E}}}=\{\check{P}_{k,\tilde{\mathcal{E}}}\}$. So we can conclude that there is $P_{\tilde{\mathcal{E}}} \in \mathcal{P}^{\text{co}}_{\tilde{\mathcal{E}}}$ such that $P_{\tilde{\mathcal{E}}}(A)=\sum_k \alpha_k \check{P}_{k,\tilde{\mathcal{E}}}(A)$, for all $A\in\mathcal{F}$, and $d_{TV}(P_{\mathcal{E}_t},P_{\tilde{\mathcal{E}}})\rightarrow 0$ as $n_t$ goes to infinity with $Q$-probability $1$. 

That is to say that for every element $P_{\mathcal{E}_t}$ of $\mathcal{P}^{\text{co}}_{\mathcal{E}_t}$, there is an element $P_{\tilde{\mathcal{E}}}$ of $\mathcal{P}^{\text{co}}_{\tilde{\mathcal{E}}}$ that $P_{\mathcal{E}_t}$ converges to (with probability $1$ in the total variation metric). This immediately implies that the Hausdorff distance between $\mathcal{P}^{\text{co}}_{\mathcal{E}_t}$ and $\mathcal{P}^{\text{co}}_{\tilde{\mathcal{E}}}$ goes to $0$ as $n_t$ goes to infinity with probability $1$.
\end{proof}

\begin{proof}[Proof of Proposition \ref{ulb}]
Fix any $A \in \mathcal{F}$ and any $t \in \mathbb{N}$. Notice that 
$$\underline{P}_{\mathcal{E}_{t+1}}(A):=\inf_{P_{\mathcal{E}_{t+1}} \in \mathcal{P}^{\text{co}}_{\mathcal{E}_{t+1}}} P_{\mathcal{E}_{t+1}}(A) = \inf_{\substack{P_{\mathcal{E}_{t+1}} \in \mathcal{P}^{\text{co}}_{\mathcal{E}_{t+1}} \\ P_{\mathcal{E}_t} \in \mathcal{P}^{\text{co}}_{\mathcal{E}_t}}} P_{\mathcal{E}_{t+1}}(A).$$
Then, we have that 
\begin{align}
\underline{P}_{\mathcal{E}_{t+1}}(A) &= \inf_{\substack{P_{\mathcal{E}_{t+1}} \in \mathcal{P}^{\text{co}}_{\mathcal{E}_{t+1}} \\ P_{\mathcal{E}_t} \in \mathcal{P}^{\text{co}}_{\mathcal{E}_t}}} \sum_{E_j \in \mathcal{E}_{t+1}} {P}_{\mathcal{E}_{t}}(A \mid E_j) P_{\mathcal{E}_{t+1}}(E_j) \nonumber \\
&\geq \sum_{E_j \in \mathcal{E}_{t+1}}  \inf_{\substack{P_{\mathcal{E}_{t+1}} \in \mathcal{P}^{\text{co}}_{\mathcal{E}_{t+1}} \\ P_{\mathcal{E}_t} \in \mathcal{P}^{\text{co}}_{\mathcal{E}_t}}} \left[ {P}_{\mathcal{E}_{t}}(A \mid E_j) P_{\mathcal{E}_{t+1}}(E_j) \right] \label{ineq} \\
&\geq \sum_{E_j \in \mathcal{E}_{t+1}}  \inf_{\substack{P_{\mathcal{E}_{t+1}} \in \mathcal{P}^{\text{co}}_{\mathcal{E}_{t+1}} \\ P_{\mathcal{E}_t} \in \mathcal{P}^{\text{co}}_{\mathcal{E}_t}}}  {P}_{\mathcal{E}_{t}}(A \mid E_j) \inf_{\substack{P_{\mathcal{E}_{t+1}} \in \mathcal{P}^{\text{co}}_{\mathcal{E}_{t+1}} \\ P_{\mathcal{E}_t} \in \mathcal{P}^{\text{co}}_{\mathcal{E}_t}}} P_{\mathcal{E}_{t+1}}(E_j)  \label{ineq2} \\
&= \sum_{E_j \in \mathcal{E}_{t+1}} \inf_{\substack{P_{\mathcal{E}_{t+1}} \in \mathcal{P}^{\text{co}}_{\mathcal{E}_{t+1}} \\ P_{\mathcal{E}_t} \in \mathcal{P}^{\text{co}}_{\mathcal{E}_t}}}  {P}_{\mathcal{E}_{t}}(A \mid E_j)  \inf_{\substack{P_{\mathcal{E}_{t+1}} \in \mathcal{P}^{\text{co}}_{\mathcal{E}_{t+1}} \\ P_{\mathcal{E}_t} \in \mathcal{P}^{\text{co}}_{\mathcal{E}_t}}} \left[ \beta(n_t){P}_{\mathcal{E}_t}(E_j) + (1-\beta(n_t))P^{emp}_{t+1}(E_j) \right]  \label{equality} \\
&= \sum_{E_j \in \mathcal{E}_{t+1}} \inf_{\substack{P_{\mathcal{E}_{t+1}} \in \mathcal{P}^{\text{co}}_{\mathcal{E}_{t+1}} \\ P_{\mathcal{E}_t} \in \mathcal{P}^{\text{co}}_{\mathcal{E}_t}}}  {P}_{\mathcal{E}_{t}}(A \mid E_j)   \left[ \beta(n_t)\inf_{\substack{P_{\mathcal{E}_{t+1}} \in \mathcal{P}^{\text{co}}_{\mathcal{E}_{t+1}} \\ P_{\mathcal{E}_t} \in \mathcal{P}^{\text{co}}_{\mathcal{E}_t}}}{P}_{\mathcal{E}_t}(E_j) + (1-\beta(n_t))P^{emp}_{t+1}(E_j) \right]  \nonumber \\
&= \sum_{E_j \in \mathcal{E}_{t+1}} \underline{P}^B_{\mathcal{E}_t}(A \mid E_j) \left[ \beta(n_t)\underline{P}_{\mathcal{E}_t}(E_j) + (1-\beta(n_t))P^{emp}_{t+1}(E_j) \right]. \nonumber
\end{align}
The inequality in \eqref{ineq} comes from the well known fact that the sum of the infima is at most equal to the infimum of the sum. The inequality in \eqref{ineq2} comes from the fact that for differentiable functions, the product of the infima is at most equal to the infimum of the product. Equation \eqref{equality} comes from equation \eqref{emp_prob_3}. 
A similar argument -- together with the facts that the supremum of the sum is at most equal to the sum of the suprema, and that for differentiable functions, the supremum of the product is at most equal to the  product of the suprema -- gives us the stated upper bound for $\overline{P}_{\mathcal{E}_{t+1}}(A)$. 
\end{proof}

\begin{proof}[Proof of Corollary \ref{cor_imp}]
Immediate from Proposition \ref{ulb} and the definitions of upper and lower probabilities.
\end{proof}

\begin{proof}[Proof of Proposition \ref{free:lunch}]
Pick any $A \in \mathcal{F}$ and any $t \in \mathbb{N}$. Then, we have that
\begin{align}
\underline{P}&_{\mathcal{E}_{t+1}}(A) \geq \sum_{E_j \in \mathcal{E}_{t+1}} \inf_{\substack{P_{\mathcal{E}_{t+1}} \in \mathcal{P}^{\text{co}}_{\mathcal{E}_{t+1}} \\ P_{\mathcal{E}_t} \in \mathcal{P}^{\text{co}}_{\mathcal{E}_t}}}  {P}_{\mathcal{E}_{t}}(A \mid E_j)  \inf_{\substack{P_{\mathcal{E}_{t+1}} \in \mathcal{P}^{\text{co}}_{\mathcal{E}_{t+1}} \\ P_{\mathcal{E}_t} \in \mathcal{P}^{\text{co}}_{\mathcal{E}_t}}} \left[ \beta(n_t){P}_{\mathcal{E}_t}(E_j) + (1-\beta(n_t))P^{emp}_{t+1}(E_j) \right] \label{equiv29} \\
&= \sum_{E_j \in \mathcal{E}_{t+1} } \inf_{\substack{P_{\mathcal{E}_{t+1}} \in \mathcal{P}^{\text{co}}_{\mathcal{E}_{t+1}} \\ P_{\mathcal{E}_t} \in \mathcal{P}^{\text{co}}_{\mathcal{E}_t}}}  {P}_{\mathcal{E}_{t}}(A \mid E_j)  \left[ \beta(n_t)\underline{P}_{\mathcal{E}_t}(E_j) + (1-\beta(n_t))P^{emp}_{t+1}(E_j) \right] \nonumber \\
&= \sum_{E_j \in \mathcal{E}_{t+1} } \inf_{\substack{P_{\mathcal{E}_{t+1}} \in \mathcal{P}^{\text{co}}_{\mathcal{E}_{t+1}} \\ P_{\mathcal{E}_t} \in \mathcal{P}^{\text{co}}_{\mathcal{E}_t}}}  \frac{{P}_{\mathcal{E}_{t}}(A \cap E_j)}{{P}_{\mathcal{E}_{t}}(E_j)}  \left[ \beta(n_t)\underline{P}_{\mathcal{E}_t}(E_j) + (1-\beta(n_t))P^{emp}_{t+1}(E_j) \right] \nonumber\\
&\geq \sum_{E_j \in \mathcal{E}_{t+1} }   \frac{\underline{P}_{\mathcal{E}_{t}}(A \cap E_j)}{\underline{P}_{\mathcal{E}_{t}}(E_j)}  \left[ \beta(n_t)\underline{P}_{\mathcal{E}_t}(E_j) + (1-\beta(n_t))P^{emp}_{t+1}(E_j) \right] \label{equiv3}\\
&= \sum_{E_j \in \mathcal{E}_{t+1} }   \underline{P}^G_{\mathcal{E}_{t}}(A \mid E_j)  \left[ \beta(n_t)\underline{P}_{\mathcal{E}_t}(E_j) + (1-\beta(n_t))P^{emp}_{t+1}(E_j) \right]. \nonumber
\end{align}
The inequality in \eqref{equiv29} comes from \eqref{equality}. The inequality in \eqref{equiv3} comes from the fact that for differentiable functions, the product of the infima is at most equal to the infimum of the product. In particular, 
$$\inf_{\substack{P_{\mathcal{E}_{t+1}} \in \mathcal{P}^{\text{co}}_{\mathcal{E}_{t+1}} \\ P_{\mathcal{E}_t} \in \mathcal{P}^{\text{co}}_{\mathcal{E}_t}}}  {P}_{\mathcal{E}_{t}}(A \cap E_j) \frac{1}{{P}_{\mathcal{E}_{t}}(E_j)} \geq \underline{P}_{\mathcal{E}_t}(A\cap E_j) \frac{1}{\underline{P}_{\mathcal{E}_t}(E_j)},$$
for all $A\in\mathcal{F}$, all $E_j\in \mathcal{E}_{t+1}$, and all $t\in\mathbb{N}$. A similar argument gives us the stated upper bound for $\overline{P}_{\mathcal{E}_{t+1}}(A)$.
\end{proof}

\begin{proof}[Proof of Corollary \ref{cor_imp2}]
The interval in \eqref{interval_imp} comes from inequalities \eqref{geom-lb} and \eqref{geom-ub}. The inequalities in \eqref{tight_low} and \eqref{tight_up} come from the inequalities in \eqref{relation}. 
\end{proof}

\begin{proof}[Proof of Proposition \ref{contr}]
By Proposition \ref{ulb}, we have that 
$$\underline{P}_{\mathcal{E}_{t}}(A) \geq \sum_{E_j \in \mathcal{E}_{t}}  \underline{P}^B_{\mathcal{E}_{t-1}}(A \mid E_j) \left[ \beta(n_t)\underline{P}_{\mathcal{E}_{t-1}}(E_j) + (1-\beta(n_t))P^{emp}_{t}(E_j) \right],$$
so if $$\sum_{E_j \in \mathcal{E}_{t}}  \underline{P}^B_{\mathcal{E}_{t-1}}(A \mid E_j) \left[ \beta(n_t)\underline{P}_{\mathcal{E}_{t-1}}(E_j) + (1-\beta(n_t))P^{emp}_{t}(E_j) \right] \geq \underline{P}_{\mathcal{E}_{t-1}}(A),$$then $\underline{P}_{\mathcal{E}_t}(A) \geq \underline{P}_{\mathcal{E}_{t-1}}(A)$. A similar reasoning gives us that 
$$\sum_{E_j \in \mathcal{E}_{t}}  \overline{P}^B_{\mathcal{E}_{t-1}}(A \mid E_j) \left[ \beta(n_t)\overline{P}_{\mathcal{E}_{t-1}}(E_j) + (1-\beta(n_t))P^{emp}_{t}(E_j) \right] \leq \overline{P}_{\mathcal{E}_{t-1}}(A)$$ 
implies $\overline{P}_{\mathcal{E}_t}(A) \leq \overline{P}_{\mathcal{E}_{t-1}}(A)$. In turn, we obtain the desired DIPK-contraction if at least one inequality is strict.
\end{proof}

\begin{proof}[Proof of Proposition \ref{sure:loss}]
By Proposition \ref{ulb}, we have that 
$$\underline{P}_{\mathcal{E}_{t}}(A) \geq \sum_{E_j \in \mathcal{E}_{t}}  \underline{P}^B_{\mathcal{E}_{t-1}}(A \mid E_j) \left[ \beta(n_t)\underline{P}_{\mathcal{E}_{t-1}}(E_j) + (1-\beta(n_t))P^{emp}_{t}(E_j) \right],$$
so if $$\sum_{E_j \in \mathcal{E}_{t}}  \underline{P}^B_{\mathcal{E}_{t-1}}(A \mid E_j) \left[ \beta(n_t)\underline{P}_{\mathcal{E}_{t-1}}(E_j) + (1-\beta(n_t))P^{emp}_{t}(E_j) \right] > \overline{P}_{\mathcal{E}_{t-1}}(A),$$
then $\underline{P}_{\mathcal{E}_t}(A) > \overline{P}_{\mathcal{E}_{t-1}}(A)$. A similar reasoning gives us that
$$\sum_{E_j \in \mathcal{E}_{t}}  \overline{P}^B_{\mathcal{E}_{t-1}}(A \mid E_j) \left[ \beta(n_t)\overline{P}_{\mathcal{E}_{t-1}}(E_j) + (1-\beta(n_t))P^{emp}_{t}(E_j) \right] < \underline{P}_{\mathcal{E}_{t-1}}(A)$$ 
implies $\overline{P}_{\mathcal{E}_t}(A) < \underline{P}_{\mathcal{E}_{t-1}}(A)$. In turn, we obtain the desired DIPK-sure loss.
\end{proof}

\begin{proof}[Proof of Proposition \ref{dilation}]
Fix any $t \in \mathbb{N}$ and consider some $A \in \mathcal{F}$. By the definitions of lower and upper probabilities, we have that $P_{s,\mathcal{E}_{t}}(A^\prime), P_{k,\mathcal{E}_{t}}(A^\prime) \in [\underline{P}_{\mathcal{E}_{t}}(A^\prime),\overline{P}_{\mathcal{E}_{t}}(A^\prime)]$, for all $A^\prime \in \mathcal{F}$. Then, if our hypotheses hold, we have that, for the set $A$ we have chosen, 
$$\underline{P}_{\mathcal{E}_{t-1}}(A) \geq P_{s,\mathcal{E}_{t}}(A) \geq \underline{P}_{\mathcal{E}_{t}}(A)$$
and 
$$\overline{P}_{\mathcal{E}_{t-1}}(A) \leq P_{k,\mathcal{E}_{t}}(A) \leq \overline{P}_{\mathcal{E}_{t}}(A).$$
This concludes the proof.
\end{proof}

\begin{proof}[Proof of Proposition \ref{contr2}]
Fix any $t \in \mathbb{N}$ and consider some $A \in \mathcal{F}$. By the definitions of lower and upper probabilities, we have that $P_{s,\mathcal{E}_{t-1}}(A^\prime), P_{k,\mathcal{E}_{t-1}}(A^\prime) \in [\underline{P}_{\mathcal{E}_{t-1}}(A^\prime),\overline{P}_{\mathcal{E}_{t-1}}(A^\prime)]$, for all $A^\prime \in \mathcal{F}$. Then, if our hypotheses hold, we have that, for the set $A$ we have chosen, 
$$\underline{P}_{\mathcal{E}_{t}}(A) \geq P_{k,\mathcal{E}_{t-1}}(A) \geq \underline{P}_{\mathcal{E}_{t-1}}(A)$$
and 
$$\overline{P}_{\mathcal{E}_{t}}(A) \leq P_{s,\mathcal{E}_{t-1}}(A) \leq \overline{P}_{\mathcal{E}_{t-1}}(A).$$
This concludes the proof.
\end{proof}

\bibliographystyle{plain}
\bibliography{ergodic_theory} 

\end{document}